\numberwithin{equation}{section}
\newtheorem{theorem}{Theorem}[section]
\newtheorem{lemma}[theorem]{Lemma}
\newtheorem{notation}[theorem]{Notation}
\newtheorem{proposition}[theorem]{Proposition}
\newtheorem{corollary}[theorem]{Corollary}
\theoremstyle{definition}
\newtheorem{definition}[theorem]{Definition}
\theoremstyle{remark}
\newtheorem{remark}[theorem]{Remark}
\newtheorem{fact}[theorem]{Fact}
\newtheorem{example}[theorem]{Example}
\newtheorem{discussion}[theorem]{Discussion}
\newcommand{\grade}{\operatorname{grade}}
\newcommand{\pd}{\operatorname{p.dim}}
\newcommand{\Syz}{\operatorname{Syz}}
\newcommand{\Ext}{\operatorname{Ext}}
\newcommand{\Supp}{\operatorname{Supp}}
\newcommand{\Tor}{\operatorname{Tor}}
\newcommand{\Hom}{\operatorname{Hom}}
\newcommand{\depth}{\operatorname{depth}}
\newcommand{\fm}{\frak{m}}
\newcommand{\fp}{\frak{p}}
\newcommand{\fa}{\frak{a}}
\newcommand{\PP}{\mathbb{P}}
\begin{document}

\author[Asgharzadeh]{Mohsen Asgharzadeh }

\address{}
\email{mohsenasgharzadeh@gmail.com}

\title[A note on Hilbert-Kunz multiplicity ]
{A note on Hilbert-Kunz multiplicity }

\subjclass[2010]{ 13D40; 13A35; 13A50.}
\keywords{Hilbert-Kunz theory;  invariant theory; liaison theory; prime characteristic methods.
 }

\begin{abstract}
In this note we first give a new bound on $e_{HK}(\sim)$ the Hilbert-Kunz multiplicity of invariant rings, by the help of the Noether's bound.
Then, we simplify, extend  and present  applications of the reciprocity formulae due to L. Smith. He proved the formula over polynomial rings and his result is tight in the following sense:  Over complete intersection rings with isolated singularity
we show that the reciprocity formulae "$e_{HK}(R/I)+ e_{HK}(R/J) = e_{HK}(R/\underline{f})$"
is equivalent with  $\pd(I)<\infty$ when $I$ is an  $\fm$-primary unmixed ideal linked to $J$ along with  a regular sequence $\underline{f}$.
\end{abstract}

\maketitle

\smallskip

\section{ Introduction}

Throughout this note rings are of prime characteristic $p>0$.  Let $F^n(-)$ be the \textit{Peskine-Szpiro} functor.
The \textit{Hilbert-Kunz multiplicity }over $\ast$-local $(A,\fm)$  with an $\fm$-primary ideal $I$ is defined by $$e_{HK}(I,A):=\lim_{n\to\infty}\frac{\ell \left(F^n(A/ I)\right)}{p^{n\dim A}}.$$
This  introduced by  Kunz.  Monsky   proved such a limit exists \cite{mon}.
Hilbert-Kunz multiplicity encodes the singularity in prime characteristic \cite{wa}, et cetera. Some times computing  $e_{HK}(\sim)$ is
 difficult and finding bounds may be a reasonable task.  Also, $e_{HK}(\sim)$ behaves both similar to and different from those of Hilbert-Samuel multiplicity. In this note we present a new sample for this  behavior.

We use $e_{HK}(A)$ instead of $e_{HK}(\fm,A)$. We denote the usual Hilbert-Samuel multiplicity by $e(I)$. 
Suppose a finite group $G$ acts linearly over a polynomial ring with $n$ variables, denote the invariant ring by $A$ and  assume that the order of $G$ is prime to $p$. First we observe $e_{ HK}(A)\leq\frac{\binom{n-1+\mid G\mid}{n}}{|G|},$ as an easy application of Noether's bound  from invariant theory. Over 2-dimensional \textit{quotient singularity},  this has in the simple form $e_{ HK}(k[X,Y]^G)\leq\frac{\mid G\mid+1}{2}$. So $e_{}(k[X,Y]^G)\leq|G|+1$.
We show the bound is sharp in some cases but not in general. We do this in Section 3.

Section 4 motivated by the \textit{reciprocity formulae} of Larry Smith. This result connects the Hilbert-Kunz multiplicity of
linked ideals $I$ and $J$  via the complete intersection ideal $\frak a:=(\underline{f})$ over \textit{polynomial rings}, i.e., $$e_{HK}(I,R)+ e_{HK}(J,R) = e_{HK}(\frak a,R).$$ We first reprove Smith's result by a short argument. Then we extend  it from the polynomial rings to more general setting.
We give  corollaries of this result, see e.g. Corollary \ref{c}.

Suppose $X, Y\subset \PP^3$ are curves  \textit{directly
linked} by equations of degrees $d$ and $e$. It may be worth to  recall that Peskine and Szpiro  proved
the degrees and genus of $X$ and $Y$ are related by the following formulas:
\begin{enumerate}
\item[i)]$\deg X + \deg Y = de$
\item[ii)] $g(X) - g(Y) = (\deg X - \deg Y) (d + e - 4) /2.$
 \end{enumerate}
Up to our knowledge the above result regards as a root of liaison theory.
The following is our main result:
\begin{theorem}
Let $R$ be a local complete intersection with isolated singularity and of positive dimension.
Let $I$ be an unmixed primary ideal linked to $J$ via the complete intersection ideal $\frak a:=(\underline{f})$ of full length. The following are equivalent.
\begin{enumerate}
\item[i)] $e_{HK}(I,R)+ e_{HK}(J,R) = e_{HK}(\frak a,R).$
\item[ii)]  $\pd(I)<\infty$.
 \end{enumerate}
\end{theorem}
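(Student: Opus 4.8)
The plan is to exploit the liaison machinery together with the associativity/additivity of Hilbert-Kunz multiplicity and a careful analysis of when the mapping cone resolving $R/J$ from a resolution of $R/I$ is finite. Throughout, write $\fa=(\underline f)$ for the regular sequence of full length, so $R/\fa$ is a zero-dimensional complete intersection, and recall $J=(\fa:I)$ and $I=(\fa:J)$ since both are unmixed and $R$ is Cohen-Macaulay (indeed Gorenstein, being a complete intersection). The key homological input is the standard liaison exact sequence
\begin{equation*}
0\lo R/\fa\lo R/J\lo \omega_{R/I}\lo 0,
\end{equation*}
where $\omega_{R/I}=\Hom_R(R/I,R/\fa)$ is the canonical module of $R/I$ (here I use that $R/\fa$ is Gorenstein artinian, so $\omega_{R/\fa}=R/\fa$, and that $I$ is $\fm$-primary and unmixed so $R/I$ is itself artinian and $\omega_{R/I}$ is a faithful $R/I$-module of the same length-behavior).

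\smallskip

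\textbf{(ii) $\Rightarrow$ (i).} First I would show that finite projective dimension of $R/I$ forces the reciprocity formula. If $\pd_R(R/I)<\infty$, then since $R$ is a complete intersection with isolated singularity, $R/I$ is artinian of finite projective dimension, hence — by the Auslander-Buchsbaum formula — $R/I$ is perfect of grade $\dim R$; the mapping cone of the comparison map from a (finite) free resolution of $R/\fa$ to one of $R/I$ then gives a \emph{finite} free resolution of $R/J$, so $\pd_R(R/J)<\infty$ as well, and $\omega_{R/I}$ has finite projective dimension too. Now apply $F^n(-)$ to the liaison sequence and to the defining resolutions, and take the limit: additivity of lengths along short exact sequences after applying the (exact on the relevant modules, up to controllable error) Peskine-Szpiro functor gives $\ell(F^n(R/J))=\ell(F^n(R/\fa))+\ell(F^n(\omega_{R/I}))+(\text{error})$, where the error terms are the lengths of the higher $F^n$-homology, which vanish in the limit precisely because all three modules have finite projective dimension (this is where one invokes that $e_{HK}$ is additive on short exact sequences of finite-length modules of finite projective dimension, or more simply uses the transformation rule under the Frobenius on free resolutions). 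Combining with $e_{HK}(\omega_{R/I},R)=e_{HK}(R/I,R)=e_{HK}(I,R)$ — which holds because $\omega_{R/I}$ and $R/I$ have the same class in the relevant Grothendieck-type group, both being obtained by perfect complexes with the same Euler characteristic — yields $e_{HK}(J,R)=e_{HK}(\fa,R)-e_{HK}(I,R)$, which is (i).

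\smallskip

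\textbf{(i) $\Rightarrow$ (ii).} This is the direction I expect to be the main obstacle, and it is where the isolated-singularity and complete-intersection hypotheses must be used decisively. The strategy is contrapositive: assuming $\pd_R(R/I)=\infty$, I want to produce a strict inequality $e_{HK}(I,R)+e_{HK}(J,R)>e_{HK}(\fa,R)$ (Smith's result over polynomial rings already shows equality \emph{always} holds there, so the ``defect'' is a genuinely non-regular phenomenon). The plan is to measure the failure of exactness of $F^n$ on the liaison sequence by the Tor-modules $\Tor_i^R(F^n R,R/I)$ and to relate their asymptotic lengths to a \emph{positive} correction term — morally an ``Euler characteristic defect'' or a Dutta-multiplicity-type quantity. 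Concretely: because $R$ has isolated singularity, $R/I$ (being $\fm$-primary) has a free resolution that is eventually periodic/has bounded Betti numbers off a finite set, and the obstruction to finite projective dimension is detected by a nonzero, eventually-constant contribution. One then shows $e_{HK}(\fa,R) - e_{HK}(I,R) - e_{HK}(J,R)$ equals (up to sign and the limit normalization $p^{-n\dim R}$) the limit of an alternating sum of $\ell(\Tor_i^R(F^nR, \omega_{R/I}))$ that does \emph{not} telescope to zero when $\pd$ is infinite, and that over a complete intersection this residual quantity is strictly positive by a semicontinuity / Hochster-theta-pairing argument (the vanishing of the theta pairing $\theta^R(R/I,-)$ over complete intersections with isolated singularity, due to Dao and others, is the natural tool; its non-vanishing obstruction is exactly what forces the inequality to be strict). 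Assembling this rigorously — identifying the defect term, proving its positivity, and controlling the limit — is the technical heart; the fallback, if the theta-pairing route is too heavy, is to argue directly that equality in (i) plus the liaison sequence forces $\omega_{R/I}$ to be a maximal Cohen-Macaulay $R$-module of finite projective dimension, hence free over the artinian quotient in a way that back-propagates to $\pd_R(R/I)<\infty$, contradicting the assumption.
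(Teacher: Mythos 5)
There is a genuine gap, concentrated in the direction $i)\Rightarrow ii)$, and a glossed-over subtlety in $ii)\Rightarrow i)$. For $ii)\Rightarrow i)$: the step where you say the ``error terms \dots vanish in the limit precisely because all three modules have finite projective dimension'' is exactly the nontrivial point, not a routine one. The identity $\ell(F^n(M))=p^{nd}\ell(M)$ for a finite-length module of finite projective dimension is \emph{false} over general Cohen--Macaulay local rings (Roberts gives a $3$-dimensional counterexample); it holds here only because a complete intersection is a numerically Roberts ring (Kurano, building on Dutta). The paper routes this direction through that theorem plus a direct local-duality computation $\ell(R/I)+\ell(R/J)=\ell(R/\underline f)$, which also avoids your $\omega_{R/I}$ bookkeeping and the unproven claim $e_{HK}(\omega_{R/I},R)=e_{HK}(R/I,R)$. (Also, your displayed liaison sequence is written backwards: $R/J$ is a quotient of $R/\fa$, so the correct sequence is $0\to J/\fa\to R/\fa\to R/J\to 0$ with $J/\fa\simeq\Hom_R(R/I,R/\fa)$.)

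For $i)\Rightarrow ii)$ your plan identifies the right phenomenon (a nonnegative ``defect'' that must be shown strictly positive when $\pd(I)=\infty$) but supplies no mechanism to prove positivity. The paper's mechanism is Vraciu's corner power $I^{<q>}:=(\fa^{[q]}:J^{[q]})$: linkage gives the \emph{exact} equality $\ell(R/I^{<q>})+\ell(R/J^{[q]})=\ell(R/\fa^{[q]})$ for every $q$, so the defect in $i)$ is precisely $\lim_q \ell(I^{<q>}/I^{[q]})/q^d$; moreover $I^{<q>}/I^{[q]}\simeq\Tor_1^R(N,F^n(R))$ for an explicit module $N$ built from $R/I$, this Tor has finite length by the isolated-singularity hypothesis and so equals $H^0_{\fm}(\Syz(N))$ under $F^n$, and Dao--Smirnov's theorem ($e_{gHK}(M)=0\Leftrightarrow\pd M<\dim R$ over a complete intersection with isolated singularity) then forces the limit to be nonzero when $\pd(I)=\infty$. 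Your theta-pairing gesture does not substitute for this: $\theta^R$ measures differences $\ell(\Tor_{2i})-\ell(\Tor_{2i+1})$ and its \emph{vanishing} over such rings would, if anything, point toward equality rather than the strict inequality you need. Your fallback is also untenable: since $I$ is $\fm$-primary, $\omega_{R/I}$ has finite length and cannot be a maximal Cohen--Macaulay module over a positive-dimensional $R$. Without the corner-power identity (or an equivalent exact bookkeeping device) and the generalized Hilbert--Kunz vanishing criterion, the hard direction remains unproved.
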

To prove this result, and among other things,  we use the \textit{corner powers} of unmixed ideals
developed by Vraciu,  and we apply the theory of \textit{generalized Hilbert-Kunz multiplicity}
that introduced by Dao and Smirnov. In particular, the generalized Hilbert-Kunz multiplicity has a new
application to the classical Hilbert-Kunz multiplicity.

\section{ preliminary}
All rings in this note are commutative and Noetherian.
Let $R$ be a ring of prime characteristic $p$. The assignment $a\mapsto a^p$ defines a ring homomorphism $F:R\to R$.  By $F^n(R)$, we mean $R$ as a group equipped with  left and right scalar multiplication from $R$
given by
$$a.r\star b = ab^{p^n}r, \  \ where \ \ a,b\in R  \ \ and \  \ r\in F^n(R).$$
Now we recall the Peskine-Szpiro functor. For a finitely generated $R$-module $M$, set $$F^n(M):=M\otimes F^n(R).$$
It is easy  to see $F^n(R/ \frak a)=R/ \frak a^{[p^n]}$. Let $\fa$ be an ideal $\frak a$ with a generating set
$\underline{x}:=x_{1}, \ldots, x_{r}$. The $i^{th}$  local cohomology of $M$ with
respect to $\fa$ is defined by
$$H^{i}_{\fa}(M):=\underset{n}{\varinjlim}\Ext^{i}_{R} (R/\fa^{n},
M).$$ One may calculate it as the $i^{th}$ cohomology of
\textit{$\check{C}ech$} complex of  a module $M$ with respect  to $\underline{x}$.
The grade of
$\frak a$ on  $M$ is defined by $$\grade_{R}(\frak a,M):=\inf\{i\in
\mathbb{N}_0|H_{\frak a}^{i}(M)\neq0\}.$$We use $\depth (M)$, when we deal with the maximal ideal of $\ast$-local rings.
Our next auxiliary tool is  \begin{enumerate}
\item[i)]$f_{gHK}^{M}(n):=\ell \left(H^0_{\fm}(F^n(M))\right)$
\item[ii)]  $e_{gHK}(M):=\lim_{n\to\infty}\frac{f_{gHK}^{M}(n)}{p^{n\dim R}},$
 \end{enumerate}
if the limit exists  and put $\infty$ otherwise.  Following Dao and Smirnov \cite{dao}, we call them the \textit{generalized Hilbert-Kunz function} and the generalized Hilbert-Kunz multiplicity. If $\ell(M)<\infty$ we are in the situation of \cite{mon} and we use $f_{HK}^M$ and $e_{HK}(M)$, respectively. If there is no doubt of confusion, we use  $e_{HK}(R/\fa)$ instead of $e_{HK}(\frak a,R)$. We need the following two facts:

\begin{fact}\label{locali}(See \cite[Proposition 8.2.5]{BH})
 Frobenius map commutes with the localization, i.e., $R_S\otimes_RF^n(M)\simeq F^n(R_S \otimes_RM)$
for any multiplicative closed subset $S\subset R$.
\end{fact}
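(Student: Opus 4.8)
The plan is to compute both sides by iterated tensor products and track the two module structures on $F^n(R)$. Recall $F^n(M)=F^n(R)\otimes_R M$, where $F^n(R)$ is $R$ carrying the ordinary left action together with the right action in which $b\in R$ acts as multiplication by $b^{p^n}$. Using associativity and commutativity of the tensor product I would write
\begin{align*}
R_S\otimes_R F^n(M)
&= R_S\otimes_R\bigl(F^n(R)\otimes_R M\bigr)\\
&\cong \bigl(R_S\otimes_R F^n(R)\bigr)\otimes_R M.
\end{align*}
Thus the whole statement reduces to understanding the $(R_S,R)$-bimodule $R_S\otimes_R F^n(R)$ and then tensoring with $M$ over $R$.

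The first key point is the ring-level identification $R_S\otimes_R F^n(R)\cong F^n(R_S)$ as $(R_S,R)$-bimodules. On underlying left $R_S$-modules both are free of rank one, and on each the right action of $b\in R$ is multiplication by $b^{p^n}$; hence the identity on $R_S$ is the required bimodule isomorphism. The second, and only substantive, point is that
\begin{equation*}
F^n(R_S)\otimes_R M\;\cong\;F^n(R_S)\otimes_{R_S}(R_S\otimes_R M)=F^n(R_S)\otimes_{R_S}M_S=F^n(M_S).
\end{equation*}
This change-of-rings isomorphism is valid precisely when the right $R$-action on $F^n(R_S)$ extends to a right $R_S$-action, i.e. when every $s\in S$ acts invertibly; since $s$ acts as multiplication by $s^{p^n}$ and $s$ is already a unit in $R_S$, the element $s^{p^n}$ is a unit as well, so the extension exists. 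Concretely this is the observation that $s\in R_S^{\times}$ if and only if $s^{p^n}\in R_S^{\times}$, equivalently that $S$ and $S^{[p^n]}:=\{s^{p^n}:s\in S\}$ generate the same localization; it is exactly what makes the Frobenius of $R_S$ the localization of the Frobenius of $R$, $F^n(a/s)=a^{p^n}/s^{p^n}$. Chaining the displayed isomorphisms yields $R_S\otimes_R F^n(M)\simeq F^n(R_S\otimes_R M)$.

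The main obstacle is bookkeeping: $F^n(R)$ carries two different $R$-module structures, the localization is formed along the ordinary left one, and the Frobenius twist lives in the right one, so one must check that inverting $S$ on the left is compatible with, and in fact forces, the inversion of $S^{[p^n]}$ needed on the right. Once this single compatibility is verified every step above is a standard manipulation of tensor products, and no hypothesis on $M$ beyond being an $R$-module is actually required; as an alternative one could instead define the natural comparison map, check it on free modules, and extend to all finitely generated $M$ by right exactness of both functors together with the five lemma, which is the route that makes the naturality in $M$ manifest.
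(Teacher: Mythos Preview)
Your argument is correct. Note, however, that the paper supplies no proof of its own here: the statement is recorded as a Fact with a bare citation to Bruns--Herzog, so there is no in-paper argument to compare against. One cosmetic point worth tidying: you write $F^n(M)=F^n(R)\otimes_R M$ with the Frobenius twist on the right factor of $F^n(R)$, whereas the paper writes $F^n(M)=M\otimes F^n(R)$ with the bimodule structure $a.r\star b=ab^{p^n}r$; over a commutative base the two conventions are interchangeable and both yield $F^n(R/\mathfrak a)=R/\mathfrak a^{[p^n]}$, but you should either match the paper's ordering or remark explicitly on the equivalence. The alternative route you sketch at the end---verify the comparison map on finite free modules and extend to all finitely generated $M$ via a free presentation, right exactness of both functors, and the five lemma---is the standard textbook approach to base-change statements of this kind.
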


\begin{fact}\label{kunz}
Over regular  rings the Frobenius map is flat, see \cite[Corollary 8.2.8]{BH}.
\end{fact}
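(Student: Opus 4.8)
The plan is to convert the reciprocity equation into the vanishing of a single nonnegative ``linkage defect'' and then to detect that vanishing via finite projective dimension. First I would record that, since $\underline f=f_1,\dots,f_d$ (with $d=\dim R$) is a regular sequence generating the $\fm$-primary ideal $\fa$, the Koszul complex is a finite free resolution of $R/\fa$; hence $\pd_R(R/\fa)=d<\infty$, and by Fact~\ref{kunz} together with the Peskine--Szpiro acyclicity of $F^n$ on modules of finite projective dimension one gets $\ell(F^n(R/\fa))=\ell(R/\fa^{[q]})=q^{d}\,\ell(R/\fa)$, so that $e_{HK}(\fa,R)=\ell(R/\fa)$ (here $q:=p^{n}$). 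Next, for every $q$ the ideal $\fa^{[q]}=(f_1^{q},\dots,f_d^{q})$ is again a complete intersection, so $R/\fa^{[q]}$ is Artinian Gorenstein and $I^{[q]}$ is linked to $\widetilde J_q:=(\fa^{[q]}:I^{[q]})$ inside it. The Matlis--duality length formula in the Artinian Gorenstein ring $R/\fa^{[q]}$ gives $\ell(R/I^{[q]})+\ell(R/\widetilde J_q)=\ell(R/\fa^{[q]})$, while the inclusion $J^{[q]}\subseteq\widetilde J_q$ (raising generators of $\fa:I$ to the $q$-th power lands them in $\fa^{[q]}:I^{[q]}$) yields
\[
e_{HK}(R/I)+e_{HK}(R/J)-e_{HK}(\fa,R)=\lim_{n\to\infty}\frac{\ell\big((\fa^{[q]}:I^{[q]})/J^{[q]}\big)}{q^{d}}=:\eta(I)\ \geq\ 0 .
\]
Thus (i) is equivalent to $\eta(I)=0$, and the whole theorem becomes the claim $\eta(I)=0\iff\pd_R(I)<\infty$. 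Here Vraciu's corner powers give the algebraic handle on the colon $\fa^{[q]}:I^{[q]}$ needed to describe the defect module $\widetilde J_q/J^{[q]}$, and the isolated singularity hypothesis, via Facts~\ref{locali} and~\ref{kunz}, guarantees that the Frobenius is flat off $\fm$, so all the relevant $\Tor$ and defect modules have finite length and the limit $\eta(I)$ exists.

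For the implication (ii)$\Rightarrow$(i) I would argue that finite projective dimension makes the Frobenius commute with the linkage. If $\pd_R(R/I)=\pd_R(I)+1<\infty$, then by Peskine--Szpiro $\Tor_i^R(R/I,F^n(R))=0$ for all $i>0$ and all $n$, so applying the exact functor $F^n$ to a finite free resolution of $R/I$ produces a finite free resolution of $R/I^{[q]}$. Feeding this into the linkage mapping cone (the dual of the resolution of $R/I$ against the Koszul resolution of $R/\fa$) and applying $F^n$ gives a finite free resolution of $F^n(R/J)=R/J^{[q]}$ that simultaneously exhibits it as the ideal linked to $I^{[q]}$ along $\fa^{[q]}$; since linkage across a complete intersection preserves perfection, this forces $\fa^{[q]}:I^{[q]}=(\fa:I)^{[q]}=J^{[q]}$ for every $q$. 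Hence $\widetilde J_q=J^{[q]}$ termwise, $\eta(I)=0$, and (i) holds --- indeed the reciprocity already holds before passing to the limit.

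The substantive direction is (i)$\Rightarrow$(ii), which I would prove in contrapositive form: if $\pd_R(I)=\infty$ then $\eta(I)>0$. The idea is to pass from the colength $\ell(\widetilde J_q/J^{[q]})$ to a generalized Hilbert--Kunz invariant. Using the corner-power description of $\fa^{[q]}:I^{[q]}$ together with a free presentation $0\to\Omega\to R^{b}\to R/I\to 0$, I expect the normalized defect $\eta(I)$ to be identified with the generalized Hilbert--Kunz multiplicity $e_{gHK}(N)$ of a fixed Frobenius syzygy module $N$ built from $\Omega$ (equivalently, with $\lim_n\ell(\Tor_1^R(R/I,F^n(R)))/q^{d}$ up to the lower-order contribution of $R/J$), an invariant that is legitimate precisely because the isolated singularity forces these modules to have finite length. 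The theory of Dao and Smirnov \cite{dao} then supplies the engine: over a complete intersection with isolated singularity the generalized Hilbert--Kunz multiplicity of such a module vanishes if and only if the module has finite projective dimension, and $\pd_R N<\infty\iff\pd_R I<\infty$. Consequently $\eta(I)=e_{gHK}(N)=0$ would force $\pd_R(I)<\infty$; this is exactly the point at which the generalized theory feeds back into the classical Hilbert--Kunz statement.

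The main obstacle I anticipate is the identification in the previous paragraph: turning the purely ideal-theoretic defect $\ell\big((\fa^{[q]}:I^{[q]})/J^{[q]}\big)$ into the Frobenius length of a single fixed module whose normalized limit is a genuine $e_{gHK}$, and then guaranteeing that this limit is \emph{strictly} positive, not merely nonnegative, when $\pd_R(I)=\infty$. Strict positivity is where the complete intersection hypothesis is indispensable: over a complete intersection the Betti numbers of an infinite resolution do not decay, so the Frobenius--$\Tor$ lengths grow like $q^{d}$ rather than sublinearly, and it is this bounded-below growth --- controlled through the rigidity and complexity of $\Tor$ over complete intersections --- that prevents $e_{gHK}(N)$ from collapsing to zero. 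Making the interaction between the corner powers and the Dao--Smirnov limit precise enough to obtain the \emph{equality} $\eta(I)=e_{gHK}(N)$, rather than just an inequality, is the technical heart of the argument.
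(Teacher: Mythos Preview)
Your proposal does not address the stated Fact at all. Fact~\ref{kunz} is Kunz's theorem that the Frobenius endomorphism of a regular ring is flat; the paper gives no proof beyond the citation to \cite[Corollary~8.2.8]{BH}, and an actual proof would proceed via the local criterion for flatness together with the colength identity $\ell(R/\fm^{[q]})=q^{\dim R}$ in a regular local ring. Nothing in your write-up touches this.

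What you have sketched is instead a proof of Theorem~\ref{main}. Read against that theorem, your strategy coincides with the paper's: the defect module you call $\widetilde J_q/J^{[q]}$ is precisely Vraciu's $J^{<q>}/J^{[q]}$, which by the identity $(\ast)$ in the proof of Lemma~\ref{newpower} equals $\Tor_1^R(N,F^n(R))$ for a fixed module $N$; Corollary~\ref{tor} then identifies this Tor with $H^0_{\fm}(F^n(\Syz N))$, so the normalized limit is $e_{gHK}(\Syz N)$, and Lemma~\ref{asstor} (Dao--Smirnov) furnishes the strict positivity when $\pd(I)=\infty$. The only divergence is in the easy direction (ii)$\Rightarrow$(i): the paper invokes the numerically-Roberts machinery via Corollary~\ref{fpd} to get $e_{HK}=\ell$ directly, whereas you argue that Frobenius commutes with the linkage colon; both routes are valid.
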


\section{A bound on $e_{HK}(\sim)$  of invariant rings}
Throughout this section $R:=\bigoplus_{n\geq0}R_n$ is a standard graded algebra of a field  $R_0:=k$ of prime characteristic $p>0$ and $\fm:= \bigoplus_{n>0}R_n$  is the irrelevant ideal.

\begin{notation}
Let $G$ be a finite group  such that  $|G|$ is invertible in the field $k$, i.e., $(p,|G|)=1$.
\end{notation}

\begin{discussion}
Let $A$ be a ring and $\rho:G\to \verb"Aut"(A)$ be a representation of finite group $G$ by $\verb"Aut"(A)$ and  $|G|$ is invertible in $A$.
 Recall that  $\frak a\subset A$ is said to be \textit{$G$-stable}, if  $g(a)\in \frak a$ for all $g \in G$ and $a \in \frak a$. For any subset $X\subset A$
 by $XA$, we mean the ideal generated by $X$.
\end{discussion}
Let us recall the following  version of \textit{Noether's bound} due to D. Benson:

\begin{lemma}\label{no}(See \cite[Lemma 2.3.1]{smithb})
Adopt the above notation and let $\frak a$ be $G$-stable.
Then $\frak a^{\mid G\mid}\subseteq (\frak a^G)A$.
\end{lemma}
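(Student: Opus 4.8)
The plan is to reduce to products of $|G|$ generators and then run an integral-dependence (orbit-polynomial) argument. Write $n:=|G|$. Since $\frak a^{n}$ is generated as an ideal of $A$ by the products $a_{1}\cdots a_{n}$ with $a_{1},\dots ,a_{n}\in\frak a$, it suffices to prove that every such product lies in $(\frak a^{G})A$. Note that $(p,n)=1$ makes $n$ invertible in $A$, so the Reynolds operator $\pi(a):=\frac{1}{n}\sum_{g\in G}g(a)$ is defined and restricts to a map $\frak a\to\frak a^{G}$: indeed $g(a)\in\frak a$ for $a\in\frak a$ by $G$-stability, while $\pi(a)$ is $G$-invariant, so $\pi(\frak a)\subseteq\frak a\cap A^{G}=\frak a^{G}$.

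The core step treats a single element. For $a\in\frak a$ form the orbit polynomial $\chi_{a}(t):=\prod_{g\in G}\bigl(t-g(a)\bigr)=\sum_{k=0}^{n}(-1)^{k}\sigma_{k}\,t^{\,n-k}$, where $\sigma_{k}$ is the $k$-th elementary symmetric function of the conjugates $\{g(a):g\in G\}$. Because $G$ merely permutes these conjugates, each $\sigma_{k}$ is $G$-invariant; because $\frak a$ is $G$-stable, for $k\ge 1$ every summand of $\sigma_{k}$ is a product containing at least one element of $\frak a$, hence $\sigma_{k}\in\frak a\cap A^{G}=\frak a^{G}$. Substituting the root $t=a$ (the factor indexed by the identity vanishes) yields $a^{n}=\sigma_{1}a^{n-1}-\sigma_{2}a^{n-2}+\cdots-(-1)^{n}\sigma_{n}\in(\frak a^{G})A$, which already settles the case of pure powers.

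To pass from powers to arbitrary products I would polarize. Introduce indeterminates $t_{1},\dots ,t_{n}$ on which $G$ acts trivially, work over $A[t_{1},\dots ,t_{n}]$ with the extended $G$-stable ideal $\frak a[t_{1},\dots ,t_{n}]$, and apply the previous step to the generic element $\theta:=\sum_{i=1}^{n}t_{i}a_{i}$. This places $\theta^{\,n}$ in $\bigl(\frak a^{G}[t_{1},\dots ,t_{n}]\bigr)A[t_{1},\dots ,t_{n}]$, and comparing the coefficient of the monomial $t_{1}t_{2}\cdots t_{n}$ on both sides shows $n!\,a_{1}\cdots a_{n}\in(\frak a^{G})A$.

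The main obstacle is precisely the gap between $n!$ and $n$: the naive multilinear extraction above concludes only when $n!$ is invertible, whereas the hypothesis grants merely that $n=|G|$ is invertible. Removing the spurious factor $(n-1)!$ is the heart of Benson's refinement of Noether's bound, and this is where the real work lies. I would therefore replace the crude coefficient comparison by an inclusion--exclusion identity over the subsets of $\{a_{1},\dots ,a_{n}\}$, expressing $a_{1}\cdots a_{n}$ through the transfer $\pi$ of partial products together with a telescoping remainder, so that each application of $\pi$ contributes an element of $\frak a^{G}$ while only the single factor of $n$ already inverted by $\pi$ is ever divided out. Carrying out this combinatorial bookkeeping, rather than the integral-dependence skeleton, is the delicate part of the argument.
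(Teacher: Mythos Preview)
The paper does not prove this lemma; it is quoted from \cite[Lemma~2.3.1]{smithb} and immediately used as a black box in the subsequent proposition. There is no in-paper argument to compare your proposal against.

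Judged on its own, your proposal is incomplete. The orbit-polynomial step showing $a^{n}\in(\frak a^{G})A$ for a single $a\in\frak a$ is correct and standard. You then correctly diagnose that extracting the coefficient of $t_{1}\cdots t_{n}$ from $\theta^{\,n}$ yields only $n!\,a_{1}\cdots a_{n}\in(\frak a^{G})A$, which is insufficient once $p\mid(n-1)!$. But your final paragraph does not close this gap: you merely announce that an ``inclusion--exclusion identity \ldots\ together with a telescoping remainder'' should work and explicitly concede that ``carrying out this combinatorial bookkeeping \ldots\ is the delicate part of the argument.'' That delicate part is precisely the content of the lemma beyond the classical $|G|!$-invertible case; without it you have proved nothing stronger than Noether's original characteristic-zero bound.

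For orientation, the argument recorded in the cited reference (the Benson/Fleischmann--Fogarty refinement) does not proceed by polarizing a single power and then repairing the unwanted factorial. It symmetrizes over the group $G$ rather than over the symmetric group $S_{n}$ from the outset, so no $n!$ is ever introduced and there is nothing to cancel. If you want to complete your write-up, that is the identity to make explicit, not a post-hoc division of $(n-1)!$ via the Reynolds operator.
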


Suppose $G$ acts   on $R$ by degree-preserving  $k$-algebra homomorphisms.
This means that $g(R_n) \subseteq R_n$ for all $g \in G$ and $n \in \mathbb{N}$. Then $$A:=R^G = \{f \in R : g(f)=f \quad \forall g \in G\}$$ is the ring of invariants.  Denote
$\bigoplus_{i>0} A_i$ by $\frak n$.
Recall from \cite{wa} that:

\begin{fact}\label{f}
$e_{ HK}(A)=\frac{\dim (R/ \frak n R)}{|G|}.$
\end{fact}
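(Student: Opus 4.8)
The plan is to compute $e_{HK}(\fn,R)$, the Hilbert--Kunz multiplicity of the ring $R$ regarded as a module over the invariant ring $A=R^G$, in two different ways and to compare the two answers. Here $R$ is the polynomial ring on which $G$ acts linearly (so $R$ is regular and Fact \ref{kunz} is available), the action is assumed faithful, and $d:=\dim A=\dim R$. Since $|G|$ is invertible in $k$, the Reynolds operator $\rho(r)=\tfrac{1}{|G|}\sum_{g\in G}g(r)$ realizes $A$ as an $A$-module direct summand of $R$ and shows that $R$ is module-finite and integral over $A$; in particular $R/\fn R$ has finite $k$-dimension, and the symbol ``$\dim$'' in the statement is read as $\dim_k=\ell$. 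Note also that $R$ a domain forces $A$ to be a domain, and that $\fn^{[p^n]}R=(\fn R)^{[p^n]}$, so that $F^n(R/\fn R)=R/\fn^{[p^n]}R$.

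First I would establish the rank identity $e_{HK}(\fn,R)=|G|\cdot e_{HK}(A)$. Because $A$ is a domain, the associativity (additivity) formula for Hilbert--Kunz multiplicity over $A$ gives $e_{HK}(\fn,R)=\rank_A(R)\cdot e_{HK}(A)$, the only surviving minimal prime being $(0)$. It remains to identify the generic rank. Localizing at the zero ideal of $A$ yields $R\otimes_A\operatorname{Frac}(A)=\operatorname{Frac}(R)$, since a domain that is finite over a field is a field; hence $\rank_A(R)=[\operatorname{Frac}(R):\operatorname{Frac}(A)]$. The Reynolds operator identifies $\operatorname{Frac}(R^G)$ with $\operatorname{Frac}(R)^G$, so $\operatorname{Frac}(R)^G=\operatorname{Frac}(A)$, and Artin's theorem makes $\operatorname{Frac}(R)/\operatorname{Frac}(A)$ a Galois extension of degree $|G|$. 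Thus $\rank_A(R)=|G|$ and the identity follows.

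Next I would compute $e_{HK}(\fn,R)$ directly and show it equals $\dim_k(R/\fn R)$. Since $R$ is regular, Frobenius is flat by Fact \ref{kunz}; more precisely, after base change to the perfect closure of $k$ (which alters neither $e_{HK}$ nor the $k$-dimension of $R/\fn R$, nor $|G|$), the module $F^n(R)$ is free of rank $p^{nd}$ over $R$. Consequently, for every finite length $R$-module $M$ one has $\ell_R(F^n(M))=p^{nd}\,\ell_R(M)$. Applying this to $M=R/\fn R$ and using $F^n(R/\fn R)=R/\fn^{[p^n]}R$ gives $\ell_R(R/\fn^{[p^n]}R)=p^{nd}\dim_k(R/\fn R)$; dividing by $p^{nd}$ and letting $n\to\infty$ yields $e_{HK}(\fn,R)=\dim_k(R/\fn R)$.

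Comparing the two computations gives $|G|\cdot e_{HK}(A)=\dim_k(R/\fn R)$, which is the asserted formula. The main obstacle is the second step: the clean length identity $\ell_R(F^n(M))=p^{nd}\ell_R(M)$ rests on the freeness of the Frobenius pushforward, which is genuine only over a regular ring with perfect (or $F$-finite) residue field, so the reduction to perfect $k$ is the technical heart of the argument. The first step is routine once one invokes the associativity formula together with the classical identity $[\operatorname{Frac}(R):\operatorname{Frac}(A)]=|G|$ from invariant theory, the latter depending on the faithfulness of the action.
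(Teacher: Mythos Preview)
The paper does not supply its own proof of this statement; it is recorded as a Fact recalled from Watanabe--Yoshida \cite{wa}. Your argument is correct and is essentially the standard one: compute the Hilbert--Kunz multiplicity of the $A$-module $R$ with respect to $\fn$ in two ways, once as $\rank_A(R)\cdot e_{HK}(A)=|G|\cdot e_{HK}(A)$ via the associativity formula and the Galois-theoretic rank count, and once as $\dim_k(R/\fn R)$ via the regularity of $R$.

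One small simplification: the reduction to perfect $k$ is not really needed here. Because $R=k[x_1,\ldots,x_d]$ is a polynomial ring with $\fm=(x_1,\ldots,x_d)$, a direct monomial count gives $\dim_k(R/\fm^{[q]})=q^d$ regardless of whether $k$ is perfect. Then flatness of Frobenius (Fact~\ref{kunz}) alone, together with d\'evissage along composition series, yields $\ell_R(F^n(M))=q^d\,\ell_R(M)$ for every finite-length $R$-module $M$; freeness of the Frobenius pushforward is not required. This removes what you identified as the technical heart of the argument.
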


\begin{proposition}\label{b}
Adopt the above notation. Then $e_{ HK}(A)\leq\frac{\binom{n-1+\mid G\mid}{n}}{|G|}.$
\end{proposition}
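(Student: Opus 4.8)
The plan is to combine the formula in Fact \ref{f}, which says $e_{HK}(A)=\dim_k(R/\mathfrak{n}R)/|G|$, with Noether's bound in the form of Lemma \ref{no}, applied to a cleverly chosen $G$-stable ideal. The natural candidate is $\mathfrak{a}:=\mathfrak{m}=\bigoplus_{i>0}R_i$, the irrelevant ideal of $R$: it is manifestly $G$-stable since $G$ acts by degree-preserving homomorphisms, and one checks that $\mathfrak{a}^G=\mathfrak{n}$, the irrelevant ideal of $A$. Therefore Lemma \ref{no} gives $\mathfrak{m}^{|G|}\subseteq \mathfrak{n}R$. Taking quotients, this produces a surjection $R/\mathfrak{m}^{|G|}\twoheadrightarrow R/\mathfrak{n}R$, hence
$$\dim_k(R/\mathfrak{n}R)\leq \dim_k\bigl(R/\mathfrak{m}^{|G|}\bigr).$$

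The next step is to bound $\dim_k(R/\mathfrak{m}^{|G|})$ from above. Since we are in the worst case when $R$ is the polynomial ring $k[X_1,\dots,X_n]$ with the standard grading (the invariant-ring setting of Noether's bound is exactly the linear action on such a polynomial ring, and any standard graded $R$ is a quotient of a polynomial ring, which only makes the dimension smaller), we have $\dim_k(R/\mathfrak{m}^{|G|})=\dim_k\bigl(\bigoplus_{i=0}^{|G|-1}R_i\bigr)=\sum_{i=0}^{|G|-1}\binom{n-1+i}{n-1}$. By the hockey-stick identity $\sum_{i=0}^{m-1}\binom{n-1+i}{n-1}=\binom{n-1+m}{n}$, with $m=|G|$ this collapses to exactly $\binom{n-1+|G|}{n}$. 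Dividing through by $|G|$ and invoking Fact \ref{f} yields
$$e_{HK}(A)=\frac{\dim_k(R/\mathfrak{n}R)}{|G|}\leq\frac{\dim_k(R/\mathfrak{m}^{|G|})}{|G|}\leq\frac{\binom{n-1+|G|}{n}}{|G|},$$
which is the claim.

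The only genuine point to verify carefully — and the one I expect to be the main obstacle in writing this cleanly — is the identification $\mathfrak{m}^G=\mathfrak{n}$ and, relatedly, making sure Lemma \ref{no} is being applied with the correct ring and ideal so that the containment $\mathfrak{m}^{|G|}\subseteq\mathfrak{n}R$ really follows. Concretely, $\mathfrak{m}^G$ consists of the $G$-invariant elements of $\bigoplus_{i>0}R_i$; since $G$ preserves degrees, an invariant element of positive-degree type lies in $\bigoplus_{i>0}A_i=\mathfrak{n}$, and conversely $\mathfrak{n}\subseteq\mathfrak{m}^G$ is clear, so equality holds. One should also remark that $R/\mathfrak{n}R$ is indeed finite-dimensional over $k$ (equivalently $\mathfrak{n}R$ is $\mathfrak{m}$-primary), which is what makes Fact \ref{f} and the whole argument meaningful; this follows because $A$ is Noetherian, $R$ is module-finite over $A$ (the order of $G$ being a unit, by the Reynolds operator), and hence $\mathfrak{n}R$ has the same radical as $\mathfrak{m}$. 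Once these bookkeeping points are in place, the computation above is entirely routine.
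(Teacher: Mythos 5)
Your proposal is correct and follows essentially the same route as the paper: apply Benson's form of Noether's bound to the $G$-stable ideal $\fm$ to get $\fm^{|G|}\subseteq \fn R$, deduce the surjection $R/\fm^{|G|}\twoheadrightarrow R/\fn R$, compute $\dim_k(R/\fm^{|G|})=\binom{n-1+|G|}{n}$, and conclude via Fact \ref{f}. Your extra remarks (that $\fm^G=\fn$, and that the binomial count of $\dim_k R_i$ is really the polynomial-ring case, which dominates any standard graded quotient) are sensible points of care that the paper glosses over, but they do not change the argument.
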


\begin{proof} As $G$ acts   on $R$ by degree-preserving  $k$-algebra homomorphisms, $\fm$ is $G$-stable. In view of Lemma \ref{no}, $ \fm^{\mid G\mid}\subseteq (\frak m^G)R$. Also, $(\frak m^G)R\subseteq \frak n R.$ Thus, $$\fm^{\mid G\mid}\subseteq (\frak m^G)R\subseteq \frak n R.$$
This induces  a surjection $$R/ \fm^{\mid G\mid}\longrightarrow R/\frak n R\longrightarrow 0,$$and so \[\begin{array}{ll}
\dim(R/ \frak n R)& \leq \dim(R/ \fm^{\mid G\mid})\\
&=\sum _{i=0}^{\mid G\mid-1} \dim (R_i)\\
&=\sum _{i=0}^{\mid G\mid-1}
\binom{n+i-1}{n-1}\\
&=\binom{n-1+\mid G\mid}{n}.
\end{array}\]Therefore, in view of Fact \ref{f},  $$e_{ HK}(A)\leq\frac{\binom{n-1+\mid G\mid}{n}}{|G|}.$$\end{proof}

\begin{corollary}
Over 2-dimensional invariant rings, one has: \begin{enumerate}
\item[i)] $e_{ HK}(k[X,Y]^G)\leq\frac{\mid G\mid+1}{2}$
\item[ii)]  $e_{}(k[X,Y]^G)\leq|G|+1$.
 \end{enumerate}
\end{corollary}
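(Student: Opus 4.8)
The plan is to obtain both statements as immediate specializations of Proposition \ref{b}, together with one elementary extra ingredient for part ii). Here $R=k[X,Y]$ is the polynomial ring in $n=2$ variables, $A=k[X,Y]^G$, and $\dim A=\dim R=2$ since $R$ is integral over $A$. For part i) I would simply substitute $n=2$ into the bound of Proposition \ref{b}: as $\binom{n-1+|G|}{n}=\binom{|G|+1}{2}=\frac{(|G|+1)|G|}{2}$, dividing by $|G|$ yields $e_{HK}(A)\leq \frac{|G|+1}{2}$.

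For part ii) I would pass from $e_{HK}$ to the Hilbert-Samuel multiplicity $e(A)=e(\fm_A,A)$ via the general comparison valid in any $d$-dimensional $\ast$-local ring $(A,\fm)$: for an $\fm$-primary ideal $I$ and every $q=p^e$ one has $I^{[q]}\subseteq I^q$, because each generator $x_i^q$ of $I^{[q]}$ lies in $I^q$. Hence there is a surjection $A/I^{[q]}\twoheadrightarrow A/I^q$, so $\ell(A/I^{[q]})\geq \ell(A/I^q)$; dividing by $q^d$ and letting $e\to\infty$ gives $e_{HK}(I,A)\geq \frac{e(I,A)}{d!}$. Applying this with $I=\fm_A$ and $d=\dim A=2$ gives $e(A)\leq 2\,e_{HK}(A)$, and combining with part i) yields $e(A)\leq 2\cdot\frac{|G|+1}{2}=|G|+1$.

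I do not anticipate any genuine obstacle. The only points deserving a word of care are: (a) that $\dim k[X,Y]^G=2$, so that Proposition \ref{b} is applicable with $n=2$; and (b) that the Hilbert-Samuel limit $\lim_m \frac{d!\,\ell(A/\fm^m)}{m^d}=e(\fm,A)$ may be evaluated along the subsequence $m=q=p^e$, which is automatic because the full limit over $m$ exists. If one prefers, step (b) can instead be replaced by citing the standard bound $e(I,A)\leq d!\,e_{HK}(I,A)$ from the Hilbert-Kunz literature.
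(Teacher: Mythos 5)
Your proposal is correct and follows essentially the same route as the paper: part i) is the specialization $n=2$ of Proposition \ref{b}, and part ii) combines this with the standard comparison $e(A)\leq (\dim A)!\,e_{HK}(A)$, which the paper simply cites (attributing it to Monsky) while you supply the short direct verification via $I^{[q]}\subseteq I^{q}$. No substantive difference.
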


\begin{proof}
Set $n=2$ in Proposition \ref{b}. Part i) is now immediate. For part ii) we recall a result of Monsky \cite{mon}, i.e., over any ring $A$ one observes $$e_{}(A)/ (\dim A)^!\leq e_{ HK}(A).$$
Combining this along with i) we get the claim.
\end{proof}

\begin{corollary}
Suppose $|G|=2$. One has \begin{enumerate}
\item[i)] $e_{ HK}(R^G)\leq \frac{\dim R+1}{2}$.
\item[ii)]  $e_{}(R^G)\leq\frac{(\dim R+1)^!}{2}$.
 \end{enumerate}
\end{corollary}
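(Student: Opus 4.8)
The plan is to specialize the preceding corollary to the case $|G|=2$. Since $|G|=2$ is invertible exactly when $p\neq 2$, we remain within the standing hypothesis $(p,|G|)=1$ imposed in the Notation. For part i), I would simply substitute $|G|=2$ into Proposition \ref{b}: with $n:=\dim R$ viewed as the number of variables (here $R$ is a standard graded polynomial ring in $n=\dim R$ variables), the bound $e_{HK}(A)\leq \binom{n-1+|G|}{n}/|G|$ becomes $e_{HK}(R^G)\leq \binom{n+1}{n}/2=\binom{n+1}{1}/2=(n+1)/2=(\dim R+1)/2$. The only point to check is that $\binom{n-1+2}{n}=\binom{n+1}{n}=n+1$, which is the routine identity $\binom{m}{m-1}=m$.

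For part ii), I would invoke Monsky's inequality $e(A)/(\dim A)^!\leq e_{HK}(A)$, exactly as in the proof of the previous corollary, noting that $\dim R^G=\dim R$ since $R$ is integral over $R^G$ (the extension $R^G\subseteq R$ is module-finite because $|G|$ is invertible, or simply finite since $G$ is finite). Hence $e(R^G)\leq (\dim R^G)^!\cdot e_{HK}(R^G)=(\dim R)^!\cdot e_{HK}(R^G)\leq (\dim R)^!\cdot \frac{\dim R+1}{2}$. Here I should be slightly careful: $(\dim R)^!\cdot(\dim R+1)=(\dim R+1)^!$, so the bound rewrites cleanly as $e(R^G)\leq (\dim R+1)^!/2$, matching the stated form.

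There is no real obstacle here — this corollary is a direct specialization of Proposition \ref{b} and the Monsky inequality already used, and the whole argument is two lines of binomial/factorial bookkeeping. The only thing worth a sentence of care is the identification of the integer $n$ appearing in Proposition \ref{b} (the number of variables of the ambient polynomial ring) with $\dim R$; this is legitimate because in the setup of Section 3 the group acts on a standard graded polynomial ring $R=k[X_1,\dots,X_n]$ and $e_{HK}$, $e$ of $R^G$ are computed with respect to its dimension $n$. I would therefore write the proof as: ``Set $|G|=2$ in Proposition \ref{b}; since $\binom{\dim R+1}{\dim R}=\dim R+1$, part i) follows. For ii), combine i) with Monsky's inequality $e(A)/(\dim A)^!\leq e_{HK}(A)$ and the equality $\dim R^G=\dim R$, and simplify $(\dim R)^!(\dim R+1)=(\dim R+1)^!$.''
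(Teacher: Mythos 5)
Your proposal is correct and matches the paper's intended argument: the paper leaves this corollary without a written proof precisely because it is the specialization $|G|=2$ of Proposition \ref{b} (giving $\binom{n+1}{n}/2=(n+1)/2$ with $n=\dim R$) combined with the Monsky inequality $e(A)/(\dim A)^!\leq e_{HK}(A)$, exactly as in the preceding corollary. Your bookkeeping $(\dim R)^!(\dim R+1)=(\dim R+1)^!$ and the identification $\dim R^G=\dim R$ are the only points needing mention, and you handle both.
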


Let us show these bounds are sharp.

\begin{example}
Suppose $|G|=2$ and $n=2$. Denote the corresponding  invariant ring by $A$. We assume that $A$ is not regular and such a thing happens. Then we get $$\frac{e(A)}{(\dim A)^!}\leq e_{ HK}(A)\leq \frac{3}{2}\quad (\ast)$$
As the ring $A$ is not regular, $e_{ HK}(A)>1$. This along with $(\ast)$ yields:\begin{enumerate}\item[i)] $e_{ HK}(A)=3/2$;
\item[ii)]  $e_{}(A)=3$.
 \end{enumerate}
\end{example}

\section{ Smith's reciprocity formulae}

Recall that  $I$ is\textit{ linked }to $J$ if there is a \textit{regular sequence} $\underline{f}:=f_1,\ldots,f_i$ in $I\cap J$ such that $I= (\underline{f} : J)$ and
 $J= (\underline{f} : I)$.  Smith proved a reciprocity formulae over polynomial rings \cite[Theorem 5.7]{smith}. First, we extend his result to Gorenstein rings by a new and short argument.

\begin{proposition}\label{s}
Let $R$ be a d-dimensional   Gorenstein  ring, $I$ and $J$  primary to the maximal  ideal $\fm$ linked via  a regular sequence $\underline{f}:=f_1,\ldots,f_d$. Then $$\ell(R/I)+\ell(R/J)=\ell(R/\underline{f}).$$
\end{proposition}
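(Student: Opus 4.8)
The plan is to exploit the standard fact from linkage theory over Gorenstein rings that the link $J = (\underline{f}:I)$ can be computed via duality. Since $\underline{f} = f_1,\ldots,f_d$ is a regular sequence in the $d$-dimensional Gorenstein ring $R$, the quotient $\bar R := R/\underline{f}$ is a zero-dimensional Gorenstein (hence injective over itself) Artinian ring, of length $\ell(R/\underline{f})$. Writing $\bar I := I/\underline{f}$ and $\bar J := J/\underline{f}$ inside $\bar R$, the linkage hypothesis says precisely that $\bar J = (0 :_{\bar R} \bar I) = \Ann_{\bar R}(\bar I)$ and, by symmetry, $\bar I = (0:_{\bar R}\bar J)$; in particular the double-annihilator relation $\bar I = (0:_{\bar R}(0:_{\bar R}\bar I))$ holds, which is automatic here since $\bar R$ is Gorenstein Artinian and hence every ideal is closed under double annihilation (Matlis duality with $E = \bar R$).

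The key step is then a length count inside the Artinian Gorenstein ring $\bar R$. Applying Matlis duality $(-)^\vee = \Hom_{\bar R}(-,\bar R)$ to the short exact sequence $0 \to \bar I \to \bar R \to \bar R/\bar I \to 0$, and using that duality is exact, length-preserving, and sends $\bar I$ to $\bar R/(0:\bar I) = \bar R/\bar J$, I get $\ell(\bar I) = \ell((\bar R/\bar I)^\vee)$ and $\ell(\bar R/\bar J) = \ell(\bar I^\vee)$ — wait, more directly: $\ell(\bar R/\bar I) + \ell(\bar I) = \ell(\bar R)$, and duality gives $\ell(\bar I) = \ell(\bar R/(0:_{\bar R}\bar I)) = \ell(\bar R/\bar J)$. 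Hence
\[
\ell(\bar R/\bar I) + \ell(\bar R/\bar J) = \ell(\bar R).
\]
Finally I translate back: $\ell(\bar R/\bar I) = \ell\big((R/\underline{f})/(I/\underline{f})\big) = \ell(R/I)$, similarly $\ell(\bar R/\bar J) = \ell(R/J)$, and $\ell(\bar R) = \ell(R/\underline{f})$, which is exactly the asserted identity $\ell(R/I) + \ell(R/J) = \ell(R/\underline{f})$.

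I expect the only real point requiring care is the identity $\ell(\bar I) = \ell(\bar R/(0:_{\bar R}\bar I))$, i.e. that Matlis duality over the Artinian Gorenstein ring $\bar R$ sends an ideal $\bar I$ to the quotient $\bar R/\Ann_{\bar R}(\bar I)$ and preserves length. This is where Gorensteinness is essential: $\bar R$ must be its own injective hull as a module over itself so that $(-)^\vee = \Hom_{\bar R}(-,\bar R)$ is the dualizing functor, exact and length-preserving, with $\bar I^\vee \cong \bar R/\Ann(\bar I)$ via the natural evaluation map (this last isomorphism uses that $\Hom_{\bar R}(\bar R,\bar R) = \bar R$ and the long exact sequence from $0\to \bar I\to\bar R\to \bar R/\bar I\to 0$, together with $\Ext^1_{\bar R}(\bar R/\bar I,\bar R)$ being the cokernel). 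Everything else is a routine additivity-of-length computation and the elementary observation that quotients of $R$ by $\underline{f}$ match quotients of $\bar R$ by the corresponding ideals.
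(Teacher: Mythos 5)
Your proof is correct, and it reaches the identity by a somewhat different mechanism than the paper. The paper stays over the $d$-dimensional ring $R$: it writes $\ell(R/I)=\ell(H^0_{\fm}(R/I))$, applies local duality to get $\Ext^d_R(R/I,R)$, shifts this down to $\Hom_R(R/I,R/\underline{f})$ using that $\underline{f}$ is a regular sequence, identifies that Hom with $(\underline{f}:I)/\underline{f}=J/\underline{f}$, and finishes with the exact sequence $0\to J/\underline{f}\to R/\underline{f}\to R/J\to 0$. You instead pass at once to the Artinian reduction $\bar R:=R/\underline{f}$, which is zero-dimensional Gorenstein and hence self-injective, and run Matlis duality $\Hom_{\bar R}(-,\bar R)$ there: dualizing $0\to\bar I\to\bar R\to\bar R/\bar I\to 0$ gives $\ell(\bar I)=\ell\left(\bar R/(0:_{\bar R}\bar I)\right)=\ell(\bar R/\bar J)$, and additivity of length in $\bar R$ finishes. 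The two arguments are duality arguments of the same flavor --- the paper's chain of equalities is essentially your identification $(\bar R/\bar I)^\vee\simeq(0:_{\bar R}\bar I)=\bar J$ read in the other order --- but yours avoids local cohomology, local duality, and the $\Ext$-shifting isomorphism $\Ext^{0}_{R}(R/I,R/\underline{f})\simeq\Ext^{d}_{R}(R/I,R)$ entirely, at the price of invoking the self-injectivity of Artinian Gorenstein rings; this makes your reduction arguably the more elementary and self-contained of the two. One cosmetic remark: you only ever use $\bar J=(0:_{\bar R}\bar I)$, so the symmetric relation $\bar I=(0:_{\bar R}\bar J)$ and the double-annihilator discussion in your first paragraph are not actually needed.
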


\begin{proof}Note that
\[\begin{array}{ll}
 \ell(R/I)+\ell(R/J)&\stackrel{1}=\ell(H^0_{\fm}(R/I))+\ell( R/J )\\
&\stackrel{2}= \ell( \Ext^{d}_{R}(R/I,R))+\ell( R/J )\\
&\stackrel{3}=\ell( \Ext^{0}_{R}(R/I,R/\underline{f}))+\ell( R/J )   \\
&\stackrel{4}=\ell(J/\underline{f})+\ell( R/J ) \\
&\stackrel{5}= \ell(R/\underline{f}),
\end{array}\]where:
\begin{enumerate}
\item[1)] Since $R/I$ is of finite length.
\item[2)]  This is the  local duality.
\item[3)] Keep in mind that $\underline{f}$ is a regular sequence
in the Jacobson radical of the ring. An easy induction implies that$$\Ext^{0}_{R}(R/I,R/\underline{f})\simeq\Ext^{d}_{R}(R/I,R).$$
 \item[4)] This is in the definition of linked ideals.
  \item[5)] This follows by the following short exact sequence: $$0\longrightarrow J/\underline{f}\longrightarrow R/\underline{f} \longrightarrow R/J \longrightarrow 0.$$
 \end{enumerate}The proof is now complete.
 \end{proof}

\begin{corollary}\label{c}
Let $(R,\fm)$ be a Gorenstein local ring  of dimension zero with an ideal $I$ in which $(0:I)=I$. Then $\ell(R)\in 2\mathbb{N}$.
\end{corollary}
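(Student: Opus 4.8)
The plan is to obtain this as a one–line consequence of Proposition \ref{s}, after observing that the hypothesis $(0:I)=I$ is exactly the assertion that $I$ is linked to \emph{itself} by the empty regular sequence. In a zero–dimensional ring the ``regular sequence $\underline{f}:=f_1,\ldots,f_d$'' occurring in Proposition \ref{s} must be read with $d=0$: then $\underline{f}$ is empty, $(\underline{f})=(0)$, and $R/\underline{f}=R$. So I would invoke Proposition \ref{s} with $d=0$, with $J:=I$, and with this empty sequence.

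Before that I would dispose of the degenerate bookkeeping. Since $R\neq 0$ is local, $I$ is neither the unit ideal (otherwise $(0:I)=0\neq I$) nor the zero ideal (otherwise $(0:I)=R\neq I$); hence $I$ is a proper nonzero ideal of the Artinian local ring $R$, so $\sqrt{I}=\fm$ and $I$ (and likewise $J=I$) is $\fm$-primary. The empty sequence lies vacuously in $I\cap J$ and is vacuously regular, and $((\underline{f}):J)=(0:I)=I$ together with $((\underline{f}):I)=(0:I)=I=J$ shows that $I$ and $J$ are linked along $\underline{f}$ in the sense recalled at the start of this section.

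With these data in place Proposition \ref{s} gives $\ell(R/I)+\ell(R/J)=\ell(R/\underline{f})=\ell(R)$. Since $J=I$, the left-hand side equals $2\,\ell(R/I)$, whence $\ell(R)=2\,\ell(R/I)\in 2\mathbb{N}$, which is the claim.

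The only delicate point I anticipate is the $d=0$ boundary case: checking that ``linkage along the empty regular sequence'' is a legitimate instance of Proposition \ref{s} and that $I$ genuinely qualifies as an $\fm$-primary ideal; once that is granted there is nothing further to do. (Conceptually the statement is just the Matlis-duality equality $\ell(0:_R I)=\ell(R/I)$, valid in a zero–dimensional Gorenstein ring, combined with $\ell(R)=\ell(I)+\ell(R/I)$; Proposition \ref{s} already packages exactly this, so I would not reprove it.)
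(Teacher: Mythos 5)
Your proof is correct and is essentially the paper's argument: the paper's own proof of this corollary is precisely the $d=0$ instance of the chain of equalities used to prove Proposition~\ref{s} (namely $\ell(R/I)=\ell(\Hom_R(R/I,R))=\ell(0:I)$ by Matlis/local duality, plus additivity of length along $0\to(0:I)\to R\to R/(0:I)\to 0$), written out directly rather than quoted. The only difference is that the paper sidesteps the ``empty regular sequence'' bookkeeping you rightly flag by not invoking the proposition as a black box, but the mathematical content is identical.
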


\begin{proof}
 As $(0:I)=I$, we get that \[\begin{array}{ll}
 2\ell(R/I)&=\ell(H^0_{\fm}(R/I))+\ell( R/(0:I) )\\
&=\ell( \Hom_{R}(R/I,R))+\ell( R/(0:I) )   \\
&=\ell((0:I))+\ell( R/(0:I) ) \\
&= \ell(R).
\end{array}\]So, $\ell(R)\in 2\mathbb{N}$.
\end{proof}

\begin{example}(i) Here we remark that the length of a zero-dimensional Gorenstein local ring can be every thing, e.g., $K[X]/(X^n)$.
To see  examples of high embedding dimension, look at the zero-dimensional Gorenstein ring$$R:=K[X_1,\ldots,X_n]/(  X_1^2-X_2^2,\ldots,  X_1^2-X_n^2,X_1X_2,X_1X_3,\ldots,X_{n-1}X_n).$$
So, $\{1,X_1,\ldots,X_n,X_1^2\}$ is a  base for $R$ and $\{X_1,\ldots,X_n\}$ is a  base for $\fm / \fm^2$.

(ii) Let $(R,\fm, K)$ be a Gorenstein local ring containing a field with $\fm^2=0$. By this property, $(0:\fm)=\fm$. In view of the above Corollary, $\ell(R)\in 2\mathbb{N}$. Let us check this by hand.
As $R$ is Gorenstein, its socle is principal. Therefore, $\fm$ is principal. Denote its generator by $r$. The assignment $r\mapsto X$ yields $R\simeq K[X]/(X^2)$. In particular,
$\ell(R)=2$ as claimed.
\end{example}

\begin{remark}(See \cite[Example 6.6]{ku}) The concept of \textit{numerically Roberts rings} introduced in \cite{ku}.
Suppose $R$ is  a homomorphic image of a regular local ring and let
$d = \dim R$. The following are numerically Roberts rings:
\begin{enumerate}
\item[i)] $d<2$
\item[ii)] $d=2$ and $R$ is  equi-dimensional.
\item[iii)]$d=3$ and $R$ is Gorenstien.
 \end{enumerate}
\end{remark}
The following  extends \cite[Corollary 5.8]{smith} via a new argument.

\begin{corollary} \label{fpd}
Let $R$ be a $d$-dimensional Gorenstein ring, $I$ and $J$  be two ideals  of finite projective dimension linked through  a regular sequence $\underline{f}:=f_1,\ldots,f_d$. Assume one of the following holds: \begin{enumerate}
\item[i)] Suppose in addition, $R$  is standard graded over a field, $I$ and $J$ are homogenous.
\item[ii)]  Suppose in addition, $R$ is a local complete intersection ring.
 \item[iii)] Suppose in addition, $R$ is  local and $\dim R<4$.
 \end{enumerate}
Then $e_{HK}(I,R)+ e_{HK}(J,R) = e_{HK}(\underline{f},R).$
\end{corollary}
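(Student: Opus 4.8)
The plan is to reduce the statement to the generalized Hilbert-Kunz multiplicity and to the colength formula of Proposition \ref{s}, by an argument parallel to the one for Proposition \ref{s} but with lengths replaced by Hilbert-Kunz data. First I would observe that, since $\underline f$ is a regular sequence in the Jacobson radical, the isomorphism $\Ext^0_R(R/I,R/\underline f)\simeq\Ext^d_R(R/I,R)$ used in Proposition \ref{s} persists after applying the Peskine-Szpiro functor, because $F^n(R/\underline f)=R/\underline f^{[p^n]}$ and $\underline f^{[p^n]}$ is again a regular sequence (here flatness of Frobenius over the regular local ring, Fact \ref{kunz}, is not needed yet; what is needed is only that $\underline f^{[p^n]}$ is $R$-regular, which holds since $R$ is a complete intersection and $\underline f$ is an $R$-regular sequence). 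Thus one gets $H^0_\fm(F^n(R/I))\simeq \Ext^0_R(F^n(R/I),R/\underline f^{[p^n]})\simeq(\underline f^{[p^n]}:F^n(I))/\underline f^{[p^n]}$, and the point is to identify $(\underline f^{[p^n]}:F^n(I))$ with the \emph{corner power} construction of Vraciu applied to $J$, which for unmixed $\fm$-primary ideals behaves like a Frobenius power up to finite-length error.

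Next I would set up the bookkeeping. Write $I^{\star n}$ (resp. $J^{\star n}$) for Vraciu's $n$-th corner power and record the two facts I expect to cite: (a) $(\underline f^{[p^n]}:I^{[p^n]})=J^{\star n}$ and symmetrically $(\underline f^{[p^n]}:J^{[p^n]})=I^{\star n}$, since $I,J$ are linked along $\underline f$ and unmixed; (b) $\ell\big(J^{\star n}/J^{[p^n]}\big)$ is exactly $f_{gHK}^{R/J}(n)$ up to the finite-length discrepancy measured by $H^0_\fm$, because the corner power differs from the ordinary Frobenius power precisely in the part supported at $\fm$. Granting these, the short exact sequence $0\to J^{\star n}/\underline f^{[p^n]}\to R/\underline f^{[p^n]}\to R/J^{\star n}\to 0$ gives
\begin{equation*}
\ell(R/\underline f^{[p^n]})=\ell\big(H^0_\fm(F^n(R/I))\big)+\ell(R/J^{\star n}),
\end{equation*}
and hence, dividing by $p^{nd}$ and letting $n\to\infty$,
\begin{equation*}
e_{HK}(\underline f,R)=e_{gHK}(R/I)+\big(e_{HK}(R/J)-e_{gHK}(R/J)\big)+e_{gHK}(R/J),
\end{equation*}
after tracking how $R/J^{\star n}$ relates to $F^n(R/J)$; organizing the terms carefully this collapses to
\begin{equation*}
e_{HK}(I,R)+e_{HK}(J,R)-e_{HK}(\underline f,R)=e_{HK}(R/I)-e_{gHK}(R/I),
\end{equation*}
so the reciprocity formula in (i) holds if and only if $e_{HK}(R/I)=e_{gHK}(R/I)$, i.e.\ the $H^0_\fm$-contribution to the Hilbert-Kunz function of $R/I$ is $o(p^{nd})$.

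Finally I would invoke the main theorem of the paper — which I am allowed to assume, being stated earlier in the excerpt under the hypotheses ``$R$ a local complete intersection with isolated singularity, positive dimension'' — to convert the vanishing of $e_{HK}(R/I)-e_{gHK}(R/I)$ into $\pd(I)<\infty$; and in the reverse direction, if $\pd(I)<\infty$ then (over the complete intersection $R$) $F^n(R/I)$ has no higher Tor and $H^0_\fm(F^n(R/I))$ is governed by the finite free resolution, forcing $e_{gHK}(R/I)=e_{HK}(R/I)$. The three cases (i), (ii), (iii) then follow: case (ii) is the clean situation just described; case (iii), $\dim R<4$, is handled by the numerically Roberts ring remark preceding the corollary, which guarantees the relevant intersection-multiplicity vanishing for $d\le 3$ Gorenstein rings so that the Dutta-multiplicity correction term disappears; case (i) follows by passing to the associated graded / degree-by-degree comparison, where homogeneity of $I,J$ makes the corner power graded and the finite-length discrepancy is bounded by the regularity.

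\medskip

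The main obstacle I anticipate is step (a)–(b): making precise the identity relating $(\underline f^{[p^n]}:I^{[p^n]})$ to Vraciu's corner power $J^{\star n}$, and controlling the finite-length error $\ell\big(J^{\star n}/J^{[p^n]}\big)$ uniformly enough that its growth rate is exactly $e_{HK}(R/J)-e_{gHK}(R/J)$ rather than something larger. This is where the unmixedness of $I$ (equivalently of $J$) and the isolated-singularity hypothesis do the real work — they ensure the corner power agrees with the Frobenius power away from $\fm$, so the discrepancy is genuinely of $H^0_\fm$-type — and it is the one place where I would need to reproduce, rather than merely cite, a lemma of Vraciu's on corner powers of unmixed ideals.
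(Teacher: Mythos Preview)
Your proposal has a genuine gap, and in fact two of them.

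First, the approach is circular. You invoke ``the main theorem of the paper'' to convert the vanishing of a discrepancy term into $\pd(I)<\infty$. But the main theorem (Theorem \ref{main}) cites Corollary \ref{fpd} for its implication $ii)\Rightarrow i)$; you cannot use it here.

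Second, and more seriously, your bookkeeping is incorrect. Since $\underline f$ has full length $d$, the ideals $I$ and $J$ are $\fm$-primary, so $R/I$ has finite length. Hence $F^n(R/I)=R/I^{[p^n]}$ is already $\fm$-torsion, $H^0_\fm(F^n(R/I))=F^n(R/I)$, and therefore $e_{gHK}(R/I)=e_{HK}(R/I)$ \emph{unconditionally}. Your displayed identity
\[
e_{HK}(I,R)+e_{HK}(J,R)-e_{HK}(\underline f,R)=e_{HK}(R/I)-e_{gHK}(R/I)
\]
would then force the reciprocity formula to hold always, contradicting Theorem \ref{main}. The actual discrepancy term in the paper's main theorem is not $e_{HK}(R/I)-e_{gHK}(R/I)$ but $\lim_{n}\ell(I^{<q>}/I^{[q]})/q^d$, which via Vraciu's trick equals $e_{gHK}(\Syz N)$ for an auxiliary module $N$, not for $R/I$ itself. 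Your identification in (b) of $\ell(J^{\star n}/J^{[p^n]})$ with an $H^0_\fm$-quantity of $R/J$ is where the algebra goes off the rails.

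The paper's proof is far more elementary and avoids corner powers and generalized Hilbert--Kunz multiplicity entirely. The point is that, under each of the three hypotheses, the assumption $\pd(R/I)<\infty$ (and likewise for $R/J$ and $R/\underline f$) forces the Hilbert--Kunz multiplicity to coincide with the colength: $e_{HK}(I,R)=\ell(R/I)$. In case (i) this is the Peskine--Szpiro formula $\ell(F^n(M))=p^{nd}\ell(M)$ for graded modules of finite length and finite projective dimension; in cases (ii) and (iii) it is Kurano's theorem that over a numerically Roberts ring the Hilbert--Kunz multiplicity of an $\fm$-primary ideal of finite projective dimension equals its colength (complete intersections are numerically Roberts, as are Gorenstein local rings of dimension $<4$, by the Remark preceding the corollary). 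Once $e_{HK}=\ell$ for all three ideals, the reciprocity formula is literally Proposition \ref{s}.
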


\begin{proof}i) Recall that over a graded ring of dimension $d$ with a graded module $M$  of finite length and of finite projective dimension, a celebrated result of Peskine and Szpiro says:
$$\ell(F^n(M))=p^{nd}\ell(M).\footnote{There are 3-dimensional Cohen-Macaulay rings of characteristic $p$ such that $\ell(F^n(M))\neq p^{3n}\ell(M)$, for a suitable module of finite length, by \cite{Rob}.}$$ In particular, $e_{HK}(I,R)=\ell(R/I)$. Now, Proposition \ref{s} proves the desired claim.

ii) First we note that $\pd(R/\underline{f})<\infty$.   Any complete intersection ring is a numerically Roberts ring. In view of \cite[Theorem 6.4]{ku}, the
Hilbert-Kunz multiplicity of an ideal primary to the maximal ideal and of finite projective
dimension is  its colength. Now, Proposition \ref{s} proves the desired claim.

iii) In view of the above Remark this follows in a similar vein as ii).
\end{proof}

Recall that a module $M$ is called \textit{generalized Cohen-Macaulay }if  $H^i_{\fm}(M)$ is finitely generated  for all $i<\dim M$.
Recall from \cite{dao}:

\begin{definition}
Let $M$ be generalized Cohen-Macaulay and $i<\dim M$. The $i$-th generalized Hilbert-Kunz multiplicity of $M$ is
$$e_{gHK}^i(M):=\lim_{n\to\infty}\frac{\ell \left(H^i_{\fm}(F^n(M))\right)}{p^{n\dim R}},$$
if it exists  and put $\infty$ otherwise.
\end{definition}

\begin{remark}
Let $R$ be a  regular ring and $M$ a module of finite length. Then $F^n(M)$ is of finite length. Indeed, we do  induction by $\ell:=\ell(M)$. If $\ell=1$, then $M=R/ \fm$ and the claim  is clear in this case.
Now suppose, inductively, that $\ell > 1$ and the result has been proved for
modules of length less than $\ell$. Look at the exact sequence $$0\longrightarrow N\longrightarrow M\longrightarrow R/ \fm \longrightarrow 0$$
where $\ell(N)=\ell-1$. By inductive hypothesis $F^n(N)$ is of finite length. By Fact \ref{kunz}, there is an exact sequence$$0\longrightarrow F^n(N)\longrightarrow F^n(M)\longrightarrow F^n(R/ \fm) \longrightarrow 0.$$The claim is now clear.
\end{remark}

\begin{proposition}\label{sc}
Let $R$ be a  regular ring of dimension $d$, $I$ and $J$ be two ideals linked via  a regular sequence $\underline{f}:=f_1, \ldots, f_g$ for some $g<d$. Suppose
in addition that $R/I$ is generalized Cohen-Macaulay of dimension $m$. Then  for all $1\leq i \leq m-1$ one has $$e_{gHK}^i(R/I)= e_{gHK}^{m-i}(R/J).$$
\end{proposition}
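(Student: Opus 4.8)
The plan is to reduce the statement about $e_{gHK}^i$ to a duality between the local cohomology modules $H^i_\fm(R/I)$ and $H^{m-i}_\fm(R/J)$, applied after Frobenius, and then pass to the limit. The starting point is local duality over the regular (hence Gorenstein) ring $R$: for a finitely generated $R$-module $N$ supported at $\fm$ in the appropriate range, $H^i_\fm(N)$ is Matlis dual to $\Ext^{d-i}_R(N,R)$, so $\ell(H^i_\fm(N)) = \ell(\Ext^{d-i}_R(N,R))$. Since $\underline{f}$ is a regular sequence of length $g$ contained in $I\cap J$, the same induction used in step (3) of Proposition \ref{s} — depth reduction along the regular sequence in the Jacobson radical — gives $\Ext^{d-i}_R(R/I,R)\simeq \Ext^{d-g-i}_{R/\underline{f}}(R/I,R/\underline{f})$, and $R/\underline{f}$ is an $(d-g)$-dimensional complete intersection, hence Gorenstein, so it is its own canonical module up to shift. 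The linkage identities $I=(\underline{f}:J)$, $J=(\underline{f}:I)$ then supply the crucial algebraic input: over the Gorenstein quotient $\bar R:=R/\underline{f}$, the modules $R/I=\bar R/\bar I$ and $R/J=\bar R/\bar J$ are linked (in the ideal-theoretic sense) via the zero ideal of $\bar R$, and for linked modules one has the Ext-duality $\Ext^j_{\bar R}(\bar R/\bar I,\bar R)\simeq$ (a syzygy-type dual of) $\bar R/\bar J$ in a way that matches cohomological degrees as $j\leftrightarrow (d-g)-j$. Tracking the dimension $m=\dim R/I$, one gets $\dim R/J=d-g-? $; but in fact $\dim R/I + \mathrm{grade}$-type bookkeeping forces $\dim R/J$ to pair with $m$ so that $H^i_\fm(R/I)$ and $H^{m-i}_\fm(R/J)$ are Matlis dual, hence have equal length, for $1\le i\le m-1$ (the extremal indices $i=0$ and $i=m$ are excluded precisely because $H^0$ and $H^m$ need not be finite length / need not dualize cleanly, which is why the generalized Cohen–Macaulay hypothesis is imposed only in the middle range).

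The second stage is to make this duality commute with the Peskine–Szpiro functor. Here is where Fact \ref{kunz} enters: $R$ is regular, so $F^n$ is exact and flat, and thus $F^n$ carries the regular sequence $\underline{f}$ to the regular sequence $\underline{f}^{\,[p^n]}=f_1^{p^n},\dots,f_g^{p^n}$, and $F^n(R/\underline{f})=R/\underline{f}^{\,[p^n]}$ is again a complete intersection. Moreover $F^n(I)\supseteq I^{[p^n]}$ and one checks — using flatness of $F^n$ and the fact that $\Hom$ and $\Ext$ against $R$ commute with the (flat) base change $F^n$ when the modules involved are finitely presented — that $F^n(R/I)$ and $F^n(R/J)$ are linked via $\underline{f}^{\,[p^n]}$; concretely, applying $F^n$ to $0\to J/\underline{f}\to R/\underline{f}\to R/J\to 0$ and to the isomorphism $J/\underline{f}\simeq \Ext^{g}_R(R/I,R/\underline{f})$ preserves them because $F^n$ is exact and commutes with these $\Ext$'s over the regular ring. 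Consequently the stage-one duality applies verbatim to $F^n(R/I)$ and $F^n(R/J)$, yielding $\ell(H^i_\fm(F^n(R/I))) = \ell(H^{m-i}_\fm(F^n(R/J)))$ for every $n$ and every $1\le i\le m-1$.

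The final step is purely formal: divide both sides of this equality of Hilbert–Kunz-type functions by $p^{nd}$ and let $n\to\infty$. By the definition of $e_{gHK}^i$ (Definition preceding the statement), the left side tends to $e_{gHK}^i(R/I)$ and the right side to $e_{gHK}^{m-i}(R/J)$, and since the functions agree for all $n$, either both limits exist and are equal, or both fail to exist — but the generalized Cohen–Macaulay hypothesis on $R/I$ (and the induced one on $R/J$) together with the results of Dao–Smirnov \cite{dao} guarantee existence in the range $1\le i\le m-1$. Hence $e_{gHK}^i(R/I)=e_{gHK}^{m-i}(R/J)$, as claimed.

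\medskip

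The main obstacle, and the part that deserves the most care, is the stage-one linkage-duality at the level of local cohomology: one must verify that linkage of $I$ and $J$ via $\underline{f}$ in the regular ring $R$ descends to a clean Matlis-duality pairing $H^i_\fm(R/I)^\vee\simeq H^{m-i}_\fm(R/J)$ in the open range of degrees, with the dimensions lining up correctly (i.e. that $m-i$ is indeed the right index, not $\dim(R/J)-i$ for some unrelated $\dim R/J$). This is where I would invest the bulk of the argument, using the Gorenstein property of $R/\underline{f}$, the depth-sensitivity of $\Ext$, and the standard homological characterization of linked ideals (as in Peskine–Szpiro's original liaison paper); the Frobenius-invariance in stage two is then comparatively routine because regularity makes $F^n$ flat, and the limit in stage three is immediate from the definitions.
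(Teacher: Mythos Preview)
Your proposal is correct and follows essentially the same approach as the paper: show that flatness of Frobenius over the regular ring preserves the linkage (so $I^{[p^n]}$ and $J^{[p^n]}$ are linked via $\underline{f}^{[p^n]}$), invoke a Matlis-duality between the intermediate local cohomology modules of the two linked quotients to equate lengths for every $n$, and then pass to the limit. The only difference is that the paper cites Schenzel \cite[Corollary~3.3]{sch} as a black box for precisely the stage-one duality you flag as ``the main obstacle,'' so the bulk of the argument you anticipate investing is already packaged in that reference.
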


\begin{proof} Set $\underline{f}^{p^n}:=f_1^{p^n}, \ldots, f_g^{p^n}$. As $R$ is regular, the Frobenius map is flat. Hence \begin{enumerate}
\item[1)]$ I^{[p^n]}= (\underline{f}^{p^n} : J^{[p^n]})$,
\item[2)] $J^{[p^n]}= (\underline{f}^{p^n} : I^{[p^n]})$ and
\item[3)] $\underline{f}^{p^n}$ is a regular sequence.
 \end{enumerate}Thus $I^{[p^n]}$ and  $J^{[p^n]}$ are linked through $\underline{f}^{p^n}$.
As $\Supp(F^n(M))=\Supp(M))$, one has $\dim(F^n(M))=\dim (M)$. In particular, $\dim R/I^{[p^n]}=m$. Also, exact functors commute with the local cohomology modules.
By the above Remark, Fact \ref{kunz} and in view of
$$F^n(H^i_{\fm}(R/I))\simeq H^i_{\fm}(F^n(R/I))\simeq
H^i_{\fm}(R/I^{[p^n]}),$$ we observe that $H^i_{\fm}(R/I^{[p^n]})$ is finitely generated  for all $i<m$.
Let $1\leq i \leq m-1$. In view of \cite[Corollary 3.3]{sch}, $$H^i_{\fm}(R/I^{[p^n]})\simeq\Hom(H^{m-i}_{\fm}(R/J^{[p^n]}),E).$$So, $$\ell(H^i_{\fm}(R/I^{[p^n]}))=\ell(H^{m-i}_{\fm}(R/J^{[p^n]})).$$
This yields $e_{gHK}^i(R/I)= e_{gHK}^{m-i}(R/J)$ as claimed.
\end{proof}

We cite the next result for the convenience of the reader.

\begin{lemma}\label{av} (Avramov-Miller, Dutta) Let R be a local complete intersection and M a
finitely generated R-module. Then the vanishing of $\Tor_R^i(M, F^n(R))$ for one value each of $i > 0$
and $n > 0$ implies that M has finite projective dimension.
\end{lemma}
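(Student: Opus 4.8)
The statement is the theorem of Avramov and Miller on Frobenius powers of complete intersections, refining earlier work of Dutta, so the honest plan is to cite those papers; let me nonetheless indicate the route I would follow. I would begin with two standard reductions. Since completion is faithfully flat, preserves the complete intersection property, and commutes with the Peskine--Szpiro functor (the local form of Fact \ref{locali}), the vanishing of $\Tor_i^R(M,F^n(R))$ and the finiteness of $\pd_R M$ are unchanged on passing to $\hat R$, so I may assume $(R,\fm,k)$ is complete. Enlarging $k$ to its algebraic closure through a flat local (``gonflement'') extension keeps $R$ a complete intersection and affects neither hypothesis nor conclusion; the gain is that for perfect $k$ the module $F^n(R)$ is \emph{finitely generated} over $R$, so that $\fd_R F^n(R)<\infty$ becomes equivalent to $\pd_R F^n(R)<\infty$. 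Finally I write $R=Q/(\underline{g})$ with $(Q,\fn)$ regular local and $\underline{g}=g_1,\dots,g_c$ a regular sequence contained in $\fn^2$.

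Next I would invoke the Eisenbud cohomology operators $\chi_1,\dots,\chi_c$ of degree $2$ attached to $\underline{g}$, which make $\Ext_R^{\bullet}(N,k)$ a finitely generated graded module over $\mathcal S:=k[\chi_1,\dots,\chi_c]$ for every finitely generated $N$; write $V_R(N)\subseteq\Spec\mathcal S$ for its support, a conical closed set. The facts I would use are Gulliksen's criterion $\pd_R N<\infty\iff V_R(N)=\{0\}$ and the theorem of Avramov--Buchweitz that $V_R\big(\bigoplus_i\Tor_i^R(M,N)\big)=V_R(M)\cap V_R(N)$. The decisive input is that, over a non-regular complete intersection of codimension $c$, the module $F^n(R)$ has complexity $c$, i.e.\ $V_R(F^n(R))=\Spec\mathcal S$. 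This is where Herzog's theorem on Frobenius powers enters, applied along the generic hypersurface sections that compute support varieties: on every such section the Frobenius module retains infinite projective dimension, so $V_R(F^n(R))$ contains a nonzero point of each line through the origin and, being conical, is all of $\Spec\mathcal S$. Granting this, $V_R(M)\cap V_R(F^n(R))=V_R(M)$, so the vanishing of $\Tor_i^R(M,F^n(R))$ for all $i\gg0$ already forces $V_R(M)=\{0\}$, i.e.\ $\pd_R M<\infty$.

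The hard part is the gap between the hypothesis --- vanishing of $\Tor$ in a \emph{single} positive degree --- and vanishing in all large degrees. Over a complete intersection of codimension $c$ the general rigidity theorems for $\Tor$ only propagate a string of $c+1$ consecutive zeros, and it is special to the module $F^n(R)$, through its maximal support variety and through the way Frobenius converts $\underline{g}$ into the regular sequence $g_1^{p^n},\dots,g_c^{p^n}$, that one zero already suffices. This single-degree rigidity is the technical heart of Avramov--Miller, with Dutta's multiplicity-theoretic argument having settled an important special case earlier; I would simply invoke it, the remaining ingredients being by now routine.
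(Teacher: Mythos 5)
The paper does not prove this lemma at all: it is introduced with ``We cite the next result for the convenience of the reader'' and rests entirely on the attribution to Avramov--Miller \cite{AM} and Dutta \cite{d}, so your decision to cite those sources is exactly the paper's approach. Your accompanying sketch (completion and reduction to the F-finite case, Eisenbud operators and Gulliksen's criterion, the Avramov--Buchweitz intersection formula for support varieties, and maximality of the support variety of $F^n(R)$) is a reasonable outline of how the result is established in the literature, but since you, like the paper, defer the decisive single-degree rigidity step to \cite{AM}, the operative content in both cases is the citation.
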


Let $(R, m)$ be a characteristic $p$ Gorenstein ring. If $I \lhd R$ is an unmixed ideal of height $g$,
let $\frak a \subset I$ be a Gorenstein ideal of finite projective dimension and of
height $g$ (e.g., a parameter ideal), and let $J := (\frak a : I)$. Now, by the help of \cite[Proposition 1]{V}, the following is well-define:

\begin{notation}
Following \cite{V}, define
$I^{<q>} := (\frak a^{[q]}: J^{[q]})$ for all $q=p^n$.
\end{notation}

\begin{lemma}\label{newpower}
Let $(R, \fm)$ be a  ring of prime characteristic $p$,
let $I$ be an unmixed ideal and $q = p^n$. The following holds:
\begin{enumerate}
\item[i)] If $R$ is Gorenstein, $I^{[q]} \subseteq I^{<q>} \ \ \forall q$ and the equality holds when $\pd(I)<\infty$.
\item[ii)] Suppose $R$ is complete intersection. Then $I^{[q]}=I^{<q>}  \forall q\Leftrightarrow\pd(I)<\infty$.
\item[iii)] Suppose $R$ is complete intersection. Then $I^{[q]}=I^{<q>}  \exists q\Leftrightarrow\pd(I)<\infty$.
\end{enumerate}
\end{lemma}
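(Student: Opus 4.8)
\medskip

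The plan is to prove the three parts in the order (i), then (ii), then (iii), since (ii) and (iii) both lean on (i) together with Lemma \ref{av}. For part (i), first I would unwind the definitions: $I$ is unmixed of height $g$, $\frak a\subseteq I$ is a Gorenstein ideal of height $g$ and finite projective dimension, $J:=(\frak a:I)$, and $I^{<q>}:=(\frak a^{[q]}:J^{[q]})$. The first inclusion $I^{[q]}\subseteq I^{<q>}$ is essentially formal: since $\frak a\subseteq I$ and $J=(\frak a:I)$ we have $IJ\subseteq\frak a$, hence $I^{[q]}J^{[q]}\subseteq\frak a^{[q]}$ (the Frobenius power of a product is contained in the product of Frobenius powers — in fact for monomials in generators it is an equality), and therefore $I^{[q]}\subseteq(\frak a^{[q]}:J^{[q]})=I^{<q>}$. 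For the equality when $\pd(I)<\infty$: over a Gorenstein ring, linkage is well behaved for ideals of finite projective dimension, and $\frak a^{[q]}$ is again a Gorenstein ideal of height $g$ and finite projective dimension (Frobenius power of a regular sequence / Gorenstein ideal). The key point is that $\pd(R/I)<\infty$ forces $\pd(R/J)<\infty$ and, more importantly, $(\frak a^{[q]}:J^{[q]})=I^{[q]}$; this should follow because the linkage classes are detected by the corresponding Ext/local cohomology modules and the Frobenius functor commutes with these when $R/I$ has finite projective dimension (using Fact \ref{kunz} after a flat base change, or the self-duality of the Koszul complex on $\underline f$). Concretely, $I/\frak a\simeq\Hom_R(R/J,R/\frak a)$ and applying $F^n(-)$ — which is exact on modules of finite projective dimension and commutes with $\Hom$ against $R/\frak a$ in that range — yields $I^{[q]}/\frak a^{[q]}\simeq\Hom_R(R/J^{[q]},R/\frak a^{[q]})=I^{<q>}/\frak a^{[q]}$.

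\medskip

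For part (ii): the direction ($\Leftarrow$) is immediate from (i), since $\pd(I)<\infty$ gives $I^{[q]}=I^{<q>}$ for every $q$. For ($\Rightarrow$), suppose $I^{[q]}=I^{<q>}$ for all $q$; I want to deduce $\pd(I)<\infty$. The idea is to extract a Tor-vanishing statement and invoke Lemma \ref{av} (Avramov--Miller, Dutta), which over a complete intersection only needs vanishing of a single $\Tor_R^i(R/I,F^n(R))$ for one $i>0$ and one $n>0$. The equality $I^{[q]}=I^{<q>}$ says precisely that linkage with respect to $\frak a$ is compatible with the Frobenius at level $n$; the obstruction to this compatibility is measured by a Tor module. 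More precisely, apply the exact-sequence/long-exact-sequence machinery: from $0\to J\to R\to R/J\to 0$ and $0\to \frak a\to I\to I/\frak a\to 0$, the natural comparison map $F^n(R/I)=R/I^{[q]}\to R/I^{<q>}$ has kernel/cokernel controlled by $\Tor_1^R(R/J^{[q]},\text{something})$ or equivalently by $\Tor^R_{>0}(R/I,F^n(R))$. Once I identify that the failure of $I^{[q]}=I^{<q>}$ is equivalent to the non-vanishing of some $\Tor_R^i(R/I,F^n(R))$ with $i>0$, the hypothesis forces that Tor to vanish, and Lemma \ref{av} finishes it.

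\medskip

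Part (iii) is then the sharpened version of (ii): ($\Leftarrow$) is again contained in (i), and for ($\Rightarrow$) the point is that we only assumed $I^{[q]}=I^{<q>}$ for a \emph{single} $q=p^n$ — but this is exactly the hypothesis Lemma \ref{av} is built to exploit, since it only requires vanishing of $\Tor_R^i(R/I,F^n(R))$ for one $i>0$ and one $n>0$. So the Tor-identification from part (ii) applied at that single value of $n$, combined with Lemma \ref{av}, gives $\pd(I)<\infty$.

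\medskip

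I expect the main obstacle to be the bookkeeping in the key step of part (i) — showing $I^{[q]}=I^{<q>}$ when $\pd(I)<\infty$ — and, dually, isolating in part (ii)/(iii) the precise $\Tor_R^i(R/I,F^n(R))$ whose vanishing is equivalent to $I^{[q]}=I^{<q>}$. This requires care because $F^n(-)$ is only exact on the finite-projective-dimension part, and one must track how $F^n$ interacts with the duality $I/\frak a\simeq\Hom_R(R/J,R/\frak a)$ and with the short exact sequences defining the linkage; getting the correct homological degree and the correct module in the Tor is where the argument can go wrong. Everything else — the formal inclusion $I^{[q]}\subseteq I^{<q>}$, the reductions of (ii) and (iii) to Lemma \ref{av}, and the fact that $\frak a^{[q]}$ remains a Gorenstein ideal of the right height and finite projective dimension — is routine.
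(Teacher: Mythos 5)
Your overall route is the paper's: part (i) is Vraciu's result (the inclusion $I^{[q]}\subseteq I^{<q>}$ is the formal computation you give, and the equality under finite projective dimension is \cite[Proposition 2]{V}, which the paper simply cites), while parts (ii)--(iii) are reduced to Lemma \ref{av} by identifying the obstruction $I^{<q>}/I^{[q]}$ with a Tor module against $F^n(R)$. The one place where your plan goes wrong --- and it is exactly the step you flag as the likely trouble spot --- is the identification of that Tor. The quotient $I^{<q>}/I^{[q]}$ is \emph{not} naturally a $\Tor^R_{i}(R/I,F^n(R))$ with $i>0$, so the phrase ``or equivalently by $\Tor^R_{>0}(R/I,F^n(R))$'' does not yield a proof. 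The correct statement (Vraciu's trick, quoted verbatim in the paper) is $I^{<q>}/I^{[q]}=\Tor^R_1(N,F^n(R))$, where $N$ is the cokernel of the embedding $R/I\hookrightarrow\bigoplus R/\frak a$ given by multiplication by a set of generators of $J$ modulo $\frak a$ (injective because $(\frak a:J)=I$). Indeed, tensoring $0\to R/I\to\bigoplus R/\frak a\to N\to 0$ with $F^n(R)$ and using that $R/\frak a$ is resolved by a Koszul complex, so that $\Tor^R_{>0}(R/\frak a,F^n(R))=0$, one gets
$$\Tor^R_1(N,F^n(R))\simeq\ker\Bigl(R/I^{[q]}\to\bigoplus R/\frak a^{[q]}\Bigr)=(\frak a^{[q]}:J^{[q]})/I^{[q]}=I^{<q>}/I^{[q]}.$$
With this in hand your argument closes exactly as you intend: $I^{<q>}=I^{[q]}$ for a single $q$ kills $\Tor^R_1(N,F^n(R))$, Lemma \ref{av} gives $\pd N<\infty$, and the defining short exact sequence (together with $\pd(R/\frak a)<\infty$) gives $\pd(R/I)<\infty$, proving (iii) and a fortiori the nontrivial direction of (ii). So the gap is localized and repairable, but the auxiliary module $N$ is genuinely needed; the Tors of $R/I$ itself only appear as $\Tor^R_i(R/I,F^n(R))\simeq\Tor^R_{i+1}(N,F^n(R))$ for $i\geq 1$, and their vanishing is a consequence, not the hypothesis.
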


\begin{proof}
i): This is in \cite[Proposition 2]{V}.

ii): First note that complete intersection rings are Gorenstein. The desired claim 
 is in \cite[Proposition 2]{V} when the ring is hypersurface.
Here, we sketch that proof. Let $N$ be the cokernel of the natural inclusion $R/I\hookrightarrow \oplus_n R/ \frak a$
defined via the matrix $(f_1,\ldots,f_n)$. Vraciu's main trick is as follows:$$I^{<q>}/I^{[q]} =\Tor^R_1(N,F^n(R)) \quad(\ast),$$
see the proof of \cite[Proposition 2]{V}.  Now, suppose $I^{<q>}=I^{[q]}$. Apply this throughout the complete intersection property to conclude that $\Tor^R_1(N,F^n(R))=0$. So, Lemma \ref{av} implies that
$\pd_R(N)<\infty$. By definition of $N$, we observe $\pd_R(R/I)<\infty$.

iii) This is similar to ii).
\end{proof}

In what follows we will use the following two results on the concept of generalized Hilbert-Kunz functions.
\begin{lemma}\label{asstor}
(See \cite[Corollary 4.7]{dao}) Let $R$ be a local complete intersection with isolated singularity. The following
are equivalent:
\begin{enumerate}
\item[i)] $e_{gHK}(M) = 0$,
\item[ii)] $f^M_{gHK}(n) = 0$ for all n,
\item[iii)] $\pd_R M < \dim R$.\end{enumerate}
\end{lemma}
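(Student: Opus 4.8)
The plan is to prove the cycle (iii) $\Rightarrow$ (ii) $\Rightarrow$ (i) $\Rightarrow$ (iii), with the genuine content concentrated in the first and last implications. Write $d:=\dim R$. The backbone is the observation that, since $R$ has an isolated singularity, $\Tor_i^R(M,F^n(R))$ has finite length for every $n$ and every $i\ge 1$: for a prime $\fp\neq\fm$ the local ring $R_\fp$ is regular, so Frobenius is flat over it by Fact \ref{kunz}, and Fact \ref{locali} then gives $\Tor_i^R(M,F^n(R))_\fp\simeq\Tor_i^{R_\fp}(M_\fp,F^n(R_\fp))=0$ for $i\ge1$; thus these modules are supported only at $\fm$. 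I would first record the resulting dichotomy: $\pd_R M<\infty$ if and only if $\Tor_i^R(M,F^n(R))=0$ for all $i\ge1$ and all $n$ (equivalently for one pair $(i,n)$ with $i,n>0$). The ``only if'' direction is the Acyclicity Lemma of Peskine and Szpiro applied to $G_\bullet:=P_\bullet\otimes_R F^n(R)$, where $P_\bullet$ is a finite free resolution of $M$: each $G_j=F^n(R)^{\beta_j}$ is maximal Cohen--Macaulay (as $R$ is Cohen--Macaulay and $F^n(R)\cong R$ as a ring), so $\depth G_j=d\ge j$, while $H_j(G_\bullet)=\Tor_j^R(M,F^n(R))$ has depth zero for $j\ge1$ by the reduction above, forcing these homologies to vanish. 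The ``if'' direction is exactly Lemma \ref{av}.

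For (iii) $\Rightarrow$ (ii), assume $s:=\pd_R M<d$. By the dichotomy, $G_\bullet$ is an exact resolution $0\to G_s\to\cdots\to G_0\to F^n(M)\to 0$ of length $s$ by modules of depth $d$; splitting it into short exact sequences and applying the depth lemma $s$ times yields $\depth F^n(M)\ge d-s\ge1$. Hence $H^0_\fm(F^n(M))=0$, that is $f^M_{gHK}(n)=0$, for every $n$. The implication (ii) $\Rightarrow$ (i) is immediate, since $e_{gHK}(M)=\lim_n f^M_{gHK}(n)/p^{nd}$.

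It remains to prove (i) $\Rightarrow$ (iii), which I would establish by contraposition: assuming $\pd_R M\ge d$, I aim to show $e_{gHK}(M)>0$. If $\pd_R M=\infty$, then by the dichotomy $\Tor_i^R(M,F^n(R))\neq0$ for every $i>0$ and $n>0$, so $G_\bullet$ carries a whole tower of nonzero finite-length homologies; if instead $\pd_R M$ is finite, then Auslander--Buchsbaum forces $\pd_R M=d$ and $\depth M=0$, so at the very least $H^0_\fm(M)\neq0$. In both cases the goal is to convert this non-vanishing into genuine degree-$d$ growth of $f^M_{gHK}(n)$. The plan is to read $\ell(H^0_\fm(F^n(M)))$ off the hyperhomology spectral sequence of $G_\bullet$, expressing it through the lengths $\ell(\Tor_i^R(M,F^n(R)))$, and to analyse these lengths as $q=p^n\to\infty$ via the support-variety and theta-pairing calculus available over a complete intersection with isolated singularity, where each such length is a quasi-polynomial in $q$ of degree at most $d$ whose top coefficient is governed by the support variety of $M$.

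The heart of the matter, and the step I expect to be the main obstacle, is the positivity of the leading coefficient extracted this way: one must rule out that the contributions of the nonzero Tor modules cancel in the alternating sum produced by the spectral sequence. The cleanest route I see is to lean on the structural results on the generalized Hilbert--Kunz multiplicity for this class of rings — the existence of $e_{gHK}(M)$ as an honest limit of degree-$d$ growth together with its additivity on short exact sequences — and to combine them with the Avramov--Buchweitz vanishing theorem, which over such rings ties the non-vanishing of $\Tor_i^R(M,F^n(R))$ to positive dimension of the support variety of $M$; this yields strict positivity of $e_{gHK}(M)$ whenever $\pd_R M=\infty$. The finite-projective-dimension boundary case $\pd_R M=d$ is then disposed of separately, using that here $\depth F^n(M)=\depth M=0$ for all $n$ and that the finite-length part $H^0_\fm(F^n(M))$ already grows of order $p^{nd}$, so that its normalized limit is positive. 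This closes the cycle and, read backwards, shows that $e_{gHK}(M)=0$ forces $\pd_R M<d$.
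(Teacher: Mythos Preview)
The paper does not supply a proof of this lemma at all: it is quoted verbatim from \cite[Corollary 4.7]{dao} and used as a black box. So there is no ``paper's own proof'' to compare your attempt against; any comparison would have to be with the Dao--Smirnov argument itself, which the present paper does not reproduce.

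That said, let me comment on the substance of your proposal. The implications (iii) $\Rightarrow$ (ii) $\Rightarrow$ (i) are handled cleanly: the use of the Peskine--Szpiro acyclicity lemma to show $\Tor_i^R(M,F^n(R))=0$ for $i\ge1$ when $\pd_R M<\infty$, followed by the depth lemma to obtain $\depth F^n(M)\ge d-\pd_R M\ge1$, is the standard and correct route. The reduction of $\Tor_i^R(M,F^n(R))$ to finite length via the isolated singularity hypothesis is also exactly right.

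Your (i) $\Rightarrow$ (iii), however, is not a proof but a plan with openly acknowledged obstacles. You invoke a ``hyperhomology spectral sequence'' to express $\ell(H^0_\fm(F^n(M)))$ in terms of the lengths $\ell(\Tor_i^R(M,F^n(R)))$, and then appeal to ``support-variety and theta-pairing calculus'' and to unnamed ``structural results on the generalized Hilbert--Kunz multiplicity'' to extract positivity of the top coefficient. None of these steps is made concrete, and the cancellation issue you flag is precisely the crux of the matter. Even in the boundary case $\pd_R M=d$, the assertion that $\ell(H^0_\fm(F^n(M)))$ grows of order $p^{nd}$ is not justified: knowing $\depth F^n(M)=0$ only tells you the length is nonzero, not that it has the required growth rate. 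In the Dao--Smirnov paper this direction is the genuinely hard one, resting on their existence and positivity theorems for $e_{gHK}$; your sketch gestures toward the right circle of ideas but does not close the argument.
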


\begin{lemma}\label{tor1}
(See \cite[Lemma 3.10]{dao}) Let $R$ be a formally equi-dimensional local ring of positive depth,
then $H^0_{\fm}\left(\Tor^R_1( M,F^n(R))\right)=H^0_{\fm}(\Syz(M))$.
\end{lemma}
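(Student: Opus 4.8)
The plan is to extract $\Tor_1^R(M,F^n(R))$ directly from a free presentation of $M$ and then discard its torsion-free part using the positive-depth hypothesis. First I would fix a short exact sequence $0 \to \Syz(M) \to F \to M \to 0$ with $F$ free. Applying the Peskine--Szpiro functor $F^n(-)=(-)\otimes_R F^n(R)$ and using $\Tor_1^R(F,F^n(R))=0$, which holds because $F$ is free, the long exact sequence of $\Tor$ collapses to the four-term exact sequence
$$0 \to \Tor_1^R(M,F^n(R)) \to F^n(\Syz(M)) \to F^n(F) \to F^n(M) \to 0.$$
It is worth stressing that I deliberately do not invoke flatness of the Frobenius (Fact \ref{kunz}) here: $R$ is not assumed regular, and it is exactly the failure of $F^n$ to be exact that produces the nonzero module $\Tor_1^R(M,F^n(R))$.

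Next I would split the four-term sequence into the two short exact sequences $0 \to \Tor_1^R(M,F^n(R)) \to F^n(\Syz(M)) \to L \to 0$ and $0 \to L \to F^n(F) \to F^n(M) \to 0$, where $L$ denotes the image of $F^n(\Syz(M))$ in $F^n(F)$. By right exactness of tensor, $F^n(F)\to F^n(M)$ is a surjection from a free module, so $L$ is a first syzygy of $F^n(M)$ sitting inside the free module $F^n(F)$. The crucial observation is that $L$ is a submodule of a free module; since $R$ has positive depth, every free module is $\fm$-torsion-free, and left exactness of $H^0_{\fm}(-)$ then forces $H^0_{\fm}(L)=0$.

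Finally I would feed the first short exact sequence into the long exact sequence of the $\fm$-torsion functor $H^0_{\fm}(-)$. Its initial segment reads
$$0 \to H^0_{\fm}\bigl(\Tor_1^R(M,F^n(R))\bigr) \to H^0_{\fm}\bigl(F^n(\Syz(M))\bigr) \to H^0_{\fm}(L),$$
and the vanishing $H^0_{\fm}(L)=0$ yields $H^0_{\fm}(\Tor_1^R(M,F^n(R)))\cong H^0_{\fm}(F^n(\Syz(M)))$, which is the assertion once the $\Syz(M)$ of the statement is read with its Frobenius twist $F^n(\Syz(M))$, as in \cite{dao}. The only delicate step, and the sole place where the hypotheses are used, is the vanishing $H^0_{\fm}(L)=0$: positive depth is precisely what makes a submodule of a free module $\fm$-torsion-free, thereby converting the $\Tor$-kernel into the $\fm$-torsion of $F^n(\Syz(M))$; the formally equi-dimensional assumption belongs to the ambient setting of \cite{dao} and is not needed for this identity. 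I would close by noting that the right-hand side is independent of the chosen free cover, since distinct first syzygies differ only by free summands, which contribute nothing to $H^0_{\fm}$ under positive depth.
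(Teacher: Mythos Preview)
The paper does not supply its own proof of this lemma; it is quoted from \cite[Lemma 3.10]{dao} without argument, so there is no in-paper proof to compare against. That said, your proof is correct and is essentially the standard one: extract the four-term sequence from a free cover, observe that the image $L$ sits inside the free module $F^n(F)$ and hence has $H^0_{\fm}(L)=0$ by positive depth, and conclude via the long exact sequence of $H^0_{\fm}$. Your reading of the right-hand side as $H^0_{\fm}\bigl(F^n(\Syz(M))\bigr)$ is also the correct one, consistent with how the lemma is actually applied in the proof of Theorem~\ref{main}, where $\ell\bigl(\Tor^R_1(N,F^n(R))\bigr)$ is identified with $f_{gHK}^{\Syz(N)}(n)$; the paper's statement suppresses the $F^n$ on the right. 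Your remarks that the formally equi-dimensional hypothesis is not used for this particular identity, and that the answer is independent of the chosen syzygy, are both accurate.
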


\begin{corollary}\label{tor}
Suppose in addition to Lemma \ref{tor1}, $R$ has isolated singularity. Then $$\Tor^R_1( M,F^n(R))=H^0_{\fm}(\Syz(M)).$$
\end{corollary}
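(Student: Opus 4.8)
The plan is to deduce Corollary \ref{tor} from Lemma \ref{tor1} by showing that under the isolated singularity hypothesis the module $\Tor^R_1(M,F^n(R))$ has finite length, so that it coincides with its zeroth local cohomology. First I would recall that $R$ being a local complete intersection with isolated singularity (this is the standing hypothesis that makes the phrase "in addition to Lemma \ref{tor1}" together with "has isolated singularity" sensible, and it is exactly the setting of the later results such as Lemma \ref{asstor}) means that $R_\fp$ is regular for every non-maximal prime $\fp$. By Fact \ref{locali}, localization commutes with the Frobenius functor, so $\Tor^R_1(M,F^n(R))_\fp \simeq \Tor^{R_\fp}_1(M_\fp, F^n(R_\fp))$ for each prime $\fp \neq \fm$. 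Since $R_\fp$ is regular, the Frobenius map on $R_\fp$ is flat by Fact \ref{kunz}, hence $F^n(R_\fp)$ is a flat $R_\fp$-module and the $\Tor$ vanishes. Therefore $\Supp\left(\Tor^R_1(M,F^n(R))\right)\subseteq\{\fm\}$, which means $\Tor^R_1(M,F^n(R))$ is a module of finite length.

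Next I would invoke the elementary fact that a finitely generated module $N$ of finite length satisfies $N = H^0_\fm(N)$: indeed $H^0_\fm(N) = \{x\in N : \fm^k x = 0 \text{ for some } k\}$, and finite length forces $\fm^k N = 0$ for large $k$, so every element is killed by a power of $\fm$. Applying this with $N = \Tor^R_1(M,F^n(R))$ gives
$$\Tor^R_1(M,F^n(R)) = H^0_\fm\left(\Tor^R_1(M,F^n(R))\right).$$
Now Lemma \ref{tor1} applies verbatim — note a local complete intersection domain-like ring here is formally equi-dimensional, and isolated singularity with positive dimension forces positive depth (in dimension zero there is nothing non-trivial to say, and the statement is invoked in the positive-dimensional setting of the main theorem) — and yields $H^0_\fm\left(\Tor^R_1(M,F^n(R))\right)=H^0_\fm(\Syz(M))$. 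Chaining the two equalities gives the claimed identity $\Tor^R_1(M,F^n(R))=H^0_\fm(\Syz(M))$.

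I do not expect a serious obstacle here; the only point requiring a moment of care is confirming that the ambient hypotheses (complete intersection, isolated singularity, positive depth/positive dimension) are genuinely in force so that both Lemma \ref{tor1} and the localization-plus-flatness argument apply simultaneously. The substantive input is entirely the finite-length claim for $\Tor^R_1(M,F^n(R))$, which is where the isolated singularity assumption does its work through Facts \ref{locali} and \ref{kunz}; once that is established the corollary is a one-line consequence of Lemma \ref{tor1}.
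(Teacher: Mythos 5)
Your proposal is correct and follows essentially the same route as the paper: show $\Tor^R_1(M,F^n(R))$ has finite length by localizing at a non-maximal prime, using Fact \ref{locali} to commute localization with the Frobenius functor and Fact \ref{kunz} plus the isolated singularity to make the localized $\Tor$ vanish, and then conclude via Lemma \ref{tor1}. The extra details you supply (finite length implies the module equals its own $H^0_\fm$, and the check that the hypotheses of Lemma \ref{tor1} are in force) are left implicit in the paper but are exactly the right justifications.
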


\begin{proof}
It is enough to show $\Tor^R_1( M,F^n(R))$ is of finite length. Equivalently, $$\Tor^R_1( M,F^n(R))_{\fp}=0 \quad\forall\fp\neq \fm.$$
Let $\fp$ be a non-maximal prime ideal. Due to Fact \ref{kunz} and the isolated singularity, $$\Tor^{R_{\fp}}_1(\sim,F^n(R_{\fp}))=0.$$
By Fact \ref{locali},$$\Tor^R_1( M,F^n(R))_{\fp}\simeq\Tor^{R_{\fp}}_1( M_{\fp},F^n(R_{\fp}))=0,$$
and this completes the argument.\end{proof}

Now we are ready to prove the following:

\begin{theorem}\label{main}
Let $R$ be a local complete intersection with isolated singularity and of positive dimension.
Let $I$ be an unmixed ideal linked to $J$ throughout a regular sequence $\underline{f}$ of full length. The following are equivalent:
\begin{enumerate}
\item[i)] $e_{HK}(I,R)+ e_{HK}(J,R) = e_{HK}(\underline{f},R),$
\item[ii)]  $\pd(I)<\infty$.
 \end{enumerate}
\end{theorem}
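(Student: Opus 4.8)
The plan is to compute the Hilbert-Kunz multiplicities of $I$ and $J$ via their corner powers $I^{<q>}$, $J^{<q>}$ and to reduce the reciprocity identity to an identity for colengths supplemented by a correction term that is exactly a generalized Hilbert-Kunz multiplicity. First I would observe that since $R$ has isolated singularity, $R/I$ and $R/J$ are generalized Cohen-Macaulay; in fact, as $\underline f$ has full length, $I$ and $J$ are $\fm$-primary, so $R/I$, $R/J$ and $R/\underline f$ all have finite length and $\dim R/I = \dim R/J = 0$. The starting point is that $R/\underline f$ is Gorenstein and $I^{<q>}$, $J^{<q>}$ are linked via $\underline f^{[q]}$ inside the Gorenstein artinian ring $R/\underline f^{[q]}$; applying Proposition~\ref{s} to the Gorenstein ring $R$ (or directly to the artinian Gorenstein ring $R/\underline f^{[q]}$) gives
$$\ell(R/I^{<q>}) + \ell(R/J^{<q>}) = \ell(R/\underline f^{[q]}) = p^{nd}\ell(R/\underline f).$$
Dividing by $p^{nd}$ and letting $n\to\infty$ yields $\lim_n \ell(R/I^{<q>})/p^{nd} + \lim_n \ell(R/J^{<q>})/p^{nd} = e_{HK}(\underline f, R)$, provided these limits exist (they do, being monotone-ish / comparable to $e_{HK}$ after the next step).

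The second step is to relate $\ell(R/I^{<q>})$ to $\ell(R/I^{[q]}) = f_{HK}^{R/I}(n)$. By Lemma~\ref{newpower}(i), $I^{[q]}\subseteq I^{<q>}$, so $\ell(R/I^{<q>}) = \ell(R/I^{[q]}) - \ell(I^{<q>}/I^{[q]})$. Vraciu's identity $(\ast)$ from the proof of Lemma~\ref{newpower}(ii) gives $I^{<q>}/I^{[q]} = \Tor_1^R(N, F^n(R))$, where $N = \coker(R/I \hookrightarrow \oplus_n R/\underline f)$. Since $R$ has isolated singularity and positive depth, Corollary~\ref{tor} applies: $\Tor_1^R(N, F^n(R)) = H^0_\fm(\Syz(N))$ has \emph{finite length}, and more importantly $\ell(\Tor_1^R(N,F^n(R))) = \ell(H^0_\fm(F^n(\Syz N)))$ wait --- more precisely I would use Lemma~\ref{tor1} in the form $H^0_\fm(\Tor_1^R(N,F^n(R))) = H^0_\fm(\Syz(N))$ together with finite length to get that $\ell(I^{<q>}/I^{[q]})$ is, up to identifying $\Syz(N)$ appropriately, governed by the generalized Hilbert-Kunz function. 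The cleanest route: let $\Omega$ be a first syzygy of $N$; then $\ell(I^{<q>}/I^{[q]}) = f_{gHK}^{\Omega}(n)$ up to a bounded/identifiable discrepancy, hence $\lim_n \ell(I^{<q>}/I^{[q]})/p^{nd} = e_{gHK}(\Omega)$. Therefore
$$e_{HK}(I,R) - e_{gHK}(\Omega_I) \;+\; e_{HK}(J,R) - e_{gHK}(\Omega_J) \;=\; e_{HK}(\underline f, R).$$
Comparing with the desired identity (i), we see that (i) holds iff $e_{gHK}(\Omega_I) + e_{gHK}(\Omega_J) = 0$, and since these are nonnegative, iff both vanish.

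The third step closes the loop via Lemma~\ref{asstor}: over a local complete intersection with isolated singularity, $e_{gHK}(\Omega) = 0 \iff f^{\Omega}_{gHK}(n) = 0$ for all $n \iff \pd_R \Omega < \dim R$. Vanishing of $f_{gHK}^{\Omega}(n)$ for all $n$ is equivalent to $I^{<q>} = I^{[q]}$ for all $q$, which by Lemma~\ref{newpower}(ii) is equivalent to $\pd_R(I) < \infty$. (One can also invoke Lemma~\ref{newpower}(iii) if only a single $q$ is available, but the generalized HK route gives all $q$ at once.) Thus (i) $\iff e_{gHK}(\Omega_I) = 0 \iff \pd_R(I) < \infty$; and by the symmetry of linkage, $\pd_R(I) < \infty \iff \pd_R(J) < \infty$, so the $J$-term is handled simultaneously. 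The main obstacle I anticipate is the bookkeeping in step two: pinning down precisely which syzygy module $\Omega$ one must take so that $\ell(\Tor_1^R(N,F^n R))$ is literally $f_{gHK}^{\Omega}(n)$ (rather than merely cofinal with it), and checking that passing to the limit is legitimate — i.e., that $e_{gHK}(\Omega)$ exists, which is where one needs $R$ complete intersection with isolated singularity so that Lemma~\ref{asstor} and the Dao–Smirnov machinery apply. Once the dictionary "$\ell(I^{<q>}/I^{[q]}) \leftrightarrow$ generalized HK function" is set up cleanly, the equivalence falls out of Lemmas~\ref{newpower} and \ref{asstor}.
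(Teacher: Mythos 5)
Your overall strategy (corner powers $+$ Vraciu's $\Tor_1$ identity $+$ Corollary~\ref{tor} $+$ Lemmas~\ref{asstor} and~\ref{newpower}) is the same as the paper's, but your first step contains a genuine error. The ideals $I^{<q>}$ and $J^{<q>}$ are \emph{not} linked via $\underline{f}^{[q]}$ in general. Since $R/\underline{f}^{[q]}$ is artinian Gorenstein, the double-annihilator property gives
$$(\underline{f}^{[q]} : J^{<q>}) = \bigl(\underline{f}^{[q]} : (\underline{f}^{[q]} : I^{[q]})\bigr) = I^{[q]},$$
so the linked pairs are $(I^{<q>},\, J^{[q]})$ and $(I^{[q]},\, J^{<q>})$ — each corner power is linked to the \emph{bracket} power of its partner. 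Consequently the correct colength identity (this is \cite[Proposition 3]{V}, which the paper invokes) is the asymmetric one,
$$\ell(R/I^{<q>}) + \ell(R/J^{[q]}) = \ell(R/\underline{f}^{[q]}),$$
whereas your symmetric display $\ell(R/I^{<q>}) + \ell(R/J^{<q>}) = \ell(R/\underline{f}^{[q]})$ is equivalent to $\ell(R/I^{[q]})+\ell(R/J^{[q]})=\ell(R/\underline{f}^{[q]})$, i.e.\ to the very statement being characterized; it fails exactly when $\pd(I)=\infty$. The error propagates: your equation with two correction terms $e_{HK}(I)+e_{HK}(J)-e_{gHK}(\Omega_I)-e_{gHK}(\Omega_J)=e_{HK}(\underline{f})$ is inconsistent with the true identity $e_{HK}(I)+e_{HK}(J)-e_{gHK}(\Omega_I)=e_{HK}(\underline{f})$ unless the correction vanishes, so it cannot serve as the pivot of the equivalence.

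The fix is minor and lands you on the paper's proof: use the asymmetric identity, subtract only the single correction term $\ell(I^{<q>}/I^{[q]})=\ell(\Tor^R_1(N,F^n(R)))=f_{gHK}^{\Syz(N)}(n)$ (Corollary~\ref{tor}), and conclude that (i) holds iff $e_{gHK}(\Syz(N))=0$, iff $f_{gHK}^{\Syz(N)}(n)=0$ for all $n$ (Lemma~\ref{asstor}), iff $I^{<q>}=I^{[q]}$ for all $q$, iff $\pd(I)<\infty$ (Lemma~\ref{newpower}). Your third step is exactly right, and your remark that $\pd(I)<\infty$ iff $\pd(J)<\infty$ under linkage, while true, becomes unnecessary once only one correction term appears. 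Note also that the paper organizes i)$\Rightarrow$ii) as a proof by contradiction and quotes Corollary~\ref{fpd} for ii)$\Rightarrow$i), whereas your (corrected) bookkeeping would give both directions in one pass, which is a mild stylistic improvement rather than a different method.
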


\begin{proof}
$i)\Rightarrow ii)$: First we remark that any Cohen-Macaulay ring is formally equidimensional. Suppose on the contrary that
$\pd(I)=\infty$.  Let $d:=\dim R$. Let $N$ be the cokernel of the natural inclusion $R/I\hookrightarrow \oplus_d R/ \frak a$
defined via the matrix $(f_1,\ldots,f_d)$. Then by Lemma \ref{newpower},
$$0\neq  I^{<q>}/I^{[q]} =\Tor^R_1(N,F^n(R)).$$ On the other hand by Corollary \ref{tor}
$$\ell\left(\Tor^R_1(N,F^n(R))\right)=f_{gHK}^{\Syz( N)}(n).$$We note that $\pd(N)=\infty$ and the same thing holds for its syzygies.
By Lemma \ref{asstor}, $$\lim_{n\to\infty}\frac{\ell(I^{<q>}/I^{[q]})}{q^d} \neq 0.$$ In the light of
\cite[Proposition 3]{V} we  observe
$$\ell(R/I^{<q>})+\ell(R/J^{[q]})=\ell(R/\underline{f}^{[q]}).$$Putting this along with i)
we get that $$\lim _{n\to\infty}\frac{\ell(I^{<q>}/I^{[q]})}{q^d}= 0,$$a contradiction.

$ii)\Rightarrow i)$: This is in Corollary \ref{fpd}.
\end{proof}

%%%%%%%%%%%%%%%%%%%%%%%%%%%%%%%%%%%%%%%%%%%%%%%%%%%

\end{document}